\DeclareMathOperator{\rk}{\text{rk}}
\DeclareMathOperator{\im}{\text{im}}
\newtheorem{thm}{Theorem}
\newtheorem{lem}[thm]{Lemma}
\newtheorem{defn}[thm]{Definition}
\newcommand{\N}{\mathbb{N}}
\newcommand{\R}{\mathbb{R}}
\newcommand{\hypermatrix}[8]
{
  \begin{tikzpicture}[baseline=(center),every node/.style={anchor=north east,minimum width=7mm,minimum height=5mm}]
  \matrix (mA) [draw,matrix of math nodes,ampersand replacement=\&]
  {
    #1 \& #2\\
    #3 \& #4\\
  };
  \matrix (mB) [draw,matrix of math nodes,ampersand replacement=\&] at ($(mA.south west)+(2.6,2.6)$)
  {
    #5 \& #6\\
    #7 \& #8\\
  };
  \draw[dashed](mA.north west)--(mB.north west);
  \draw[dashed](mA.south east)--(mB.south east);
  \node (center) at ($(mA.south east)!0.5!(mB.north west)$) {};
  \end{tikzpicture}
}
\newcommand{\hypermatrixlong}[8]
{
  \begin{tikzpicture}[baseline=(center),every node/.style={anchor=north east,minimum width=7mm,minimum height=5mm}]
  \matrix (mA) [draw,matrix of math nodes,ampersand replacement=\&]
  {
    #1 \& #2\\
    #3 \& #4\\
  };
  \matrix (mB) [draw,matrix of math nodes,ampersand replacement=\&] at ($(mA.south west)+(3.3,3.3)$)
  {
    #5 \& #6\\
    #7 \& #8\\
  };
  
  \draw[dashed](mA.north west)--(mB.north west);
  \draw[dashed](mA.south east)--(mB.south east);
  \node (center) at ($(mA.south east)!0.5!(mB.north west)$) {};
  \end{tikzpicture}
}
\title{Rank Two Approximations of $2 \times 2 \times 2$ Tensors over $\mathbb{R}$}
\author{David Warren Katz}
\begin{document}

\date{}
\maketitle

\begin{abstract}
We provide a coordinate-free proof that real $2 \times 2 \times 2$ rank three tensors do not have optimal rank two approximations with respect to the Frobenius norm. This result was first proved in \cite[Thm. 8.1]{Lim} by considering the ${\text{GL}(V^1) \times \text{GL}(V^2) \times \text{GL}(V^3)}$ orbit classes of ${V^1 \otimes V^2 \otimes V^3}$ and the $2 \times 2 \times 2$ hyperdeterminant. Our coordinate-free proof expands on the result in \cite[]{Lim} by developing a proof method that can be generalized more readily to higher dimensional $n_1 \times n_2 \times n_3$ tensor spaces.
\end{abstract}

\section{Introduction}

Let $V^1$, $V^2$, and $V^3$ be two-dimensional real vector spaces, respectively, and let ${V^1 \otimes V^2 \otimes V^3}$ denote the tensor product of these spaces. The tensors in the form $v^1 \otimes v^2 \otimes v^3$ for some vectors $v^1$, $v^2$, and $v^3$ are called simple tensors. Every tensor can be written as the sum of finitely many simple tensors. The rank of a tensor $\tau$ is the minimum number $n$ such that $\tau$ is the sum of $n$ simple tensors. That is,
\begin{gather*}
    \text{rk} (\tau) \ = \ \min \{n \in \mathbb{N} \mid \ \tau \ = {\sum_{i = 1}^{n} \ v_{i}^1 \otimes v_{i}^2 \otimes v_{i}^3} \ \text{ for } \text{some } v_i^s \in V^s \}.
\end{gather*}
Once bases $\{e^s_t \}_{t = 1}^{2}$ of $V^s$ are chosen for $s=1,2,3$, a tensor in ${V^1 \otimes V^2 \otimes V^3}$ can be coordinitized as a hypermatrix in $\R^{2 \times 2 \times2}$ by the isomorphism from $V^1 \otimes V^2 \otimes V^3$ to $\R^{2 \times 2 \times 2}$ defined on simple tensors as
\begin{align*}
    (a^1_1 e^1_1 + a^1_2e^1_2) \otimes (a^2_1 e^2_1 + a^2_2e^2_2) \otimes (a^3_1 e^3_1 + a^3_2e^3_2) \ \  \longmapsto \ \     \hypermatrixlong{a_1^1a_1^2a_1^3}{a_1^1a_1^2a_2^3}{a_1^1a_2^2a_1^3}{a_1^1a_2^2a_2^3}{a_2^1a_1^2a_1^3}{a_2^1a_1^2a_2^3}{a_2^1a_2^2a_1^3}{a_2^1a_2^2a_2^3}
\end{align*}
for some real constants $a^s_t$. A lot of information about the rank of the $2 \times 2 \times 2$ real hypermatrix
\begin{gather*}
(a_{ijk}) \ = \ \hypermatrix{a_{111}}{a_{112}}{a_{121}}{a_{122}}{a_{211}}{a_{212}}{a_{221}}{a_{222}} 
\end{gather*}
can be inferred from sign of the polynomial
\begin{align*}
\Delta(a_{ijk}) &= (a_{111}^2a_{222}^2 + a_{112}^2a_{221}^2 + a_{121}^2 a_{212}^2 + a_{122}^2 a_{211}^2 ) \\
& \hspace{10mm} -2( a_{111}a_{112}a_{221}a_{222}+a_{111}a_{121}a_{212}a_{222}+ a_{111}a_{122}a_{211}a_{222}) \\
& \hspace{10mm} -2(a_{112}a_{121}a_{212}a_{221} + a_{112}a_{122}a_{221}a_{211}+ a_{121}a_{122}a_{212}a_{211}) \\
& \hspace{10mm} + 4(a_{111}a_{122}a_{212}a_{221} + a_{112}a_{121}a_{211}a_{222}).
\end{align*}
The polynomial $\Delta$ is called the $2 \times 2 \times2$ hyperdeterminant. $\Delta(a_{ijk}) > 0$ implies hypermatrix $(a_{ijk})$ is rank two, and $\Delta(a_{ijk}) < 0$ implies $(a_{ijk})$ is rank three. The existence of such a polynomial is not particular to the $2 \times 2 \times 2$ case. The sets of constant rank real hypermatrices are semialgebraic for any dimensions $n_1 \times n_2 \times \dots \times n_d$. Hence, the rank of a real hypermatrix can always be computed by evaluating a finite number of polynomial equalities and inequalities, where the variables of the polynomials are the entries of the hypermatrix. Because of this, real tensors are usually studied in coordinates by representing tensors as hypermatrices and then exploiting properties of these polynomials. The polynomials that define the sets of constant rank $2 \times 2 \times 2$ tensors are different, however, from the polynomials that define the sets of constant rank $n_1 \times n_2 \times n_3$ tensors for any other dimensions $n_1 \times n_2 \times n_3$. As a result, coordinate proofs about $2 \times 2 \times 2$ tensors that rely on these polynomials cannot be readily generalized to the arbitrary $n_1 \times n_2 \times n_3$ case. In this paper, we take an alternative coordinate-free approach in our study of $2 \times 2 \times 2$ tensors that lends itself more readily to generalization. We prove that real $2 \times 2 \times 2$ rank three tensors do not have optimal rank two approximations with respect to the Frobenius norm without relying on hypermatrices or the hyperdeterminant.

Matrix rank is lower semi-continuous for matrices of real numbers. That is, if a sequence of rank $r$ real matrices converges to a matrix $A$ in the norm topology, then the rank of $A$ is less than or equal to $r$. This guarantees the existence of optimal low-rank approximations of real matrices. A sequence of real hypermatrices, however, can converge to a hypermatrix of greater rank. As a result, optimal low rank approximations may not exist for real hypermatrices. In fact, the set of real hypermatrices with no optimal low rank approximation often has positive Lebesgue measure \cite{Alwin}, so characterizing such real hypermatrices is an important step in implementing algorithms with real hypermatrices.

Let $\tau \in V^1 \otimes V^2 \otimes V^3$ be a tensor of rank $r$, and let $s \le r$. An optimal rank $s$ approximation of $\tau$ with respect to norm $\| \cdot \|$ is a tensor $\upsilon$ such that 
     \begin{equation*}
         \| \tau - \upsilon \| \ =  \ \inf_{\substack{\upsilon' \in V^1 \otimes V^2 \otimes V^3 \\ \rk(\upsilon') \le s}}\ \| \tau - \upsilon' \|.
     \end{equation*}     
 
\noindent It is a classical observation that tensors in the form
\begin{gather} 
  \beta \ = \ x^1_2 \otimes x^2_1 \otimes  x_1^3 \ + \ x_1^1 \otimes  x_2^2 \otimes  x_1^3 \ + \ x_1^1 \otimes  x_1^2 \otimes  x_2^3 \label{tangent_form_1}
\end{gather}  
are limit points of the sequence of rank at most two tensors 
\begin{equation}
    \tau_n \ = \ n \ (x_1^1 + \frac{1}{n} x_2^1) \otimes  (x_1^2 + \frac{1}{n} x_2^2) \otimes    (x_1^3 + \frac{1}{n} x_2^3) \ - \ n x_1^1 \otimes  x_1^2 \otimes x_1^3, \ n \in \N \label{sequence_rank_two}
\end{equation}
in the norm topology. It follows from the triangle inequality and the multilinearity of the tensor product that
\begin{gather*}
    \norm{\tau_n - \beta} \ \leq \ \frac{1}{n} \ \norm{x_2^1 \otimes x_2^2 \otimes x_1^3 \ + \ x_2^1 \otimes x^2_1 \otimes x_2^3 \ + \ x_1^1 \otimes x_2^2 \otimes x_2^3} \ + \ \frac{1}{n^2} \ \norm{x_2^1 \otimes x_2^2 \otimes x_2^3}.
\end{gather*}
Hence, $\lim\limits_{n \to \infty}  \Vert \tau_n - \beta \Vert = 0$, so $\lim\limits_{n \to \infty} \tau_n$ indeed equals $\beta$. In Section \ref{section_2}, we give a coordinate-free proof that tensors in the form of (\ref{tangent_form}) are in fact rank three when the sets $\{x_1^1, x_2^1\}$, $\{x_1^2, x_2^2\}$, and $\{x_1^3, x_2^3\}$ are independent. Thus, when $\{x_1^1, x_2^1\}$, $\{x_1^2, x_2^2\}$, and $\{x_1^3, x_2^3\}$ are independent, $\beta$ is an example of a rank three tensor with no an optimal rank two approximation. The main result of this paper is a coordinate-free proof that in fact $\it{every}$ rank three $2 \times 2 \times 2$ tensor has no optimal rank two approximation with respect to the Frobenius norm, not just the tensors in the form of (\ref{tangent_form_1}). 

This result was first proved in coordinates in \cite[Thm. 8.1]{Lim}. The argument in \cite[]{Lim} can be summarized as follows. Suppose for contradiction that the $2 \times 2 \times 2$ real hypermatrix $B$ is an optimal rank two approximation of rank three $2 \times 2 \times 2$ real hypermatrix $A$ with respect to the Frobenius norm. By considering properties of the polynomial $\Delta$, it follows that $\Delta(B) = 0$. The polynomial $\Delta$ is invariant on the ${\text{GL}(V^1) \times \text{GL}(V^2) \times \text{GL}(V^3)}$ orbit classes of ${V^1 \otimes V^2 \otimes V^3}$, and only three of the eight orbit classes are zero on $\Delta$. Hypermatrices in these three orbit classes are equivalent up to an orthogonal change of coordinates to hypermatrices in the form
\begin{gather}\label{special_form}
\hypermatrix{\lambda}{0}{0}{\mu}{0}{0}{0}{0} 
\end{gather}
for some real $\lambda$ and $\mu$. Since $B$ is rank two, we may thus assume $B$ is in form (\ref{special_form}) with both $\lambda$ and $\mu$ nonzero. Finally, it is shown that if $H$ is a $2 \times 2 \times 2$ hypermatrix such that 
\begin{gather}
    \Delta(B + \epsilon H) = 0 \label{condition_star}
\end{gather}
for all real $\epsilon$, then $A - B$ is orthogonal to $H$. By considering various hypermatrices $H$ that satisfy (\ref{condition_star}), the authors of \cite{Lim} then conclude that
\begin{gather*}
A - B \ = \ \hypermatrix{0}{0}{0}{0}{a\mu}{0}{0}{-a\lambda},
\end{gather*}
for some constant $a$. This implies that
\begin{gather*}
A \ = \ \hypermatrix{\lambda}{0}{0}{\mu}{a\mu}{0}{0}{-a\lambda}.
\end{gather*}
However, this hypermatrix is rank two, contradicting that $A$ is rank three. 

Our coordinate-free proof expands on the result in \cite[]{Lim} by developing a proof method that can be generalized more readily to higher dimensional tensor spaces. Our proof is also a proof by contradiction. We suppose for contradiction that rank two tensor $\upsilon$ in ${V^1 \otimes V^2 \otimes V^3}$ is an optimal rank two approximation of rank three tensor $\tau$. By considering the relationship between the mode-$1$ contraction maps of $\tau$ and $\upsilon$, we also derive the contradiction that $\tau$ is rank two. Our proof has the interesting geometric corollary that the nearest point of a rank three tensor to the second secant variety of the Segre variety is a rank three tensor in the tangent space of the Segre variety.

\section{Contraction Maps of Tensors}\label{section_2}

Let $V^1$, $V^2$, and $V^3$ be finite $n_1$, $n_2$, and $n_3$-dimensional real vector spaces, and let $V^{1*}$, $V^{2*}$, and $V^{3*}$ denote their dual spaces. The tensor ${\tau = \sum_{i =1}^r v^1_i \otimes v^2_i \otimes v^3_i \in V^1 \otimes V^2 \otimes V^3}$ induces the three linear maps 
\begin{align*}
    V^{1*} &\xrightarrow{\Pi_1(\tau)} V^2 \otimes V^3  &    V^{2*} &\xrightarrow{\Pi_2(\tau)} V^1 \otimes V^3  & & & V^{3*} &\xrightarrow{\Pi_3(\tau)} V^1 \otimes V^2 \\
    v^{1*} &\mapsto \sum_{i=1}^r \ v^{1*}(v^1_i) \ v^2_i \otimes v^3_i,  &   v^{2*} &\mapsto \sum_{i=1}^r \ v^{2*}(v^2_i) \ v^1_i \otimes v^3_i, &&\text{and} &   v^{3*} &\mapsto \sum_{i=1}^r \ v^{3*}(v^3_i) \ v^1_i \otimes v^2_i.
\end{align*}
\noindent These maps are called the mode-$1$, mode-$2$, and mode-$3$ contraction maps of $\tau$, respectively. 

\begin{thm}\label{contraction_maps_well_defined}
The mode-$i$ contraction map $\Pi_i$ is well-defined. 
\end{thm}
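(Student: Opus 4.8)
The plan is to recognize that the only thing requiring proof here is \emph{representation-independence}: a tensor $\tau$ admits many decompositions $\tau = \sum_{i} v^1_i \otimes v^2_i \otimes v^3_i$ into simple tensors, so a priori the expression $\sum_i v^{1*}(v^1_i)\, v^2_i \otimes v^3_i$ could depend on which decomposition we use, and the content of the statement is that it does not. The cleanest route is the universal property of the tensor product rather than a direct computation with the kernel of the defining quotient. I will argue for $\Pi_1$; the cases $\Pi_2$ and $\Pi_3$ are identical after permuting the roles of the three factors.

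First I would introduce the map
\[
\Phi : V^1 \x V^2 \x V^3 \longrightarrow \Hom(V^{1*}, V^2 \otimes V^3), \qquad \Phi(v^1,v^2,v^3)(v^{1*}) = v^{1*}(v^1)\, v^2 \otimes v^3,
\]
and verify two points. The first is that each $\Phi(v^1,v^2,v^3)$ really is a linear map $V^{1*} \to V^2 \otimes V^3$, which is immediate since $v^{1*} \mapsto v^{1*}(v^1)$ is linear in $v^{1*}$. The second is that $\Phi$ is trilinear in $(v^1,v^2,v^3)$: linearity in the first slot comes from linearity of the evaluation pairing, while linearity in the second and third slots comes from bilinearity of $\otimes$ on $V^2 \otimes V^3$. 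These are routine checks that I would carry out but not belabor.

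Next I would invoke the universal property of the tensor product: the trilinear map $\Phi$ factors uniquely through a linear map
\[
\tilde{\Phi} : V^1 \otimes V^2 \otimes V^3 \longrightarrow \Hom(V^{1*}, V^2 \otimes V^3), \qquad \tilde{\Phi}(v^1 \otimes v^2 \otimes v^3) = \Phi(v^1,v^2,v^3).
\]
Because $\tilde{\Phi}$ is a genuine function of $\tau$, its value $\tilde{\Phi}(\tau)$ depends only on $\tau$. I would then close the argument by noting that for any decomposition $\tau = \sum_i v^1_i \otimes v^2_i \otimes v^3_i$, linearity of $\tilde{\Phi}$ gives $\tilde{\Phi}(\tau) = \sum_i \Phi(v^1_i, v^2_i, v^3_i)$, which is exactly the formula defining $\Pi_1(\tau)$. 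Hence $\Pi_1 = \tilde{\Phi}$, and $\Pi_1(\tau)$ is independent of the chosen representation.

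I do not expect a serious obstacle: once the claim is framed as representation-independence, the universal property does essentially all the work. The only care needed is in the bookkeeping of the trilinearity verification and in confirming that $\Hom(V^{1*}, V^2 \otimes V^3)$ is the correct codomain for $\Phi$; the conceptual point worth stating explicitly is that a well-definedness claim is genuinely required here precisely because the simple-tensor decomposition of $\tau$ is highly non-unique.
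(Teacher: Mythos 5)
Your proof is correct, but it takes a genuinely different route from the paper. The paper proves well-definedness by a direct computation in coordinates: it fixes bases $\{e^i_t\}$ of each $V^i$, expands both decompositions $\sum_t v^1_t \otimes v^2_t \otimes v^3_t = \sum_t w^1_t \otimes w^2_t \otimes w^3_t$ in the basis $\{e^1_i \otimes e^2_j \otimes e^3_k\}$, deduces that the coefficient sums $\sum_t a^1_{ti}a^2_{tj}a^3_{tk} = \sum_t b^1_{ti}b^2_{tj}b^3_{tk}$ agree, and then checks equality of the two candidate values of $\Pi_1(\tau)(e^{1*}_u)$ term by term. Your argument instead packages the map as a trilinear map $\Phi: V^1 \times V^2 \times V^3 \to \Hom(V^{1*}, V^2 \otimes V^3)$ and invokes the universal property of the tensor product to factor it through a linear map on $V^1 \otimes V^2 \otimes V^3$, which by construction depends only on $\tau$ and agrees with $\Pi_1$ on any simple-tensor decomposition. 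Both arguments are complete. Yours is shorter, basis-free, and generalizes verbatim to $d$-fold tensor products and to modules over a commutative ring --- somewhat ironically, it is more in the spirit of the ``coordinate-free'' program the paper advertises than the paper's own proof of this particular theorem. The paper's computation, on the other hand, is self-contained and elementary: it requires only that the simple tensors $e^1_i \otimes e^2_j \otimes e^3_k$ form a basis (so that coefficients are unique), and does not presuppose that the reader has the universal property available as a black box. The one point you should make explicit if you write this up is which form of the universal property you are using for a triple product --- either the trilinear version directly, or the binary version iterated together with the associativity isomorphism --- but this is bookkeeping, not a gap.
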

\begin{proof}
Suppose that ${\sum_{t=1}^r v^1_t \otimes v^2_t \otimes v^3_t}$ ${= \sum_{t=1}^s w^1_t \otimes w^2_t \otimes w^3_t}$ for vectors $v^i_t$, $w^i_t$ $\in V^i$. We need to show that 
\begin{equation*}
    \Pi_i \left( \sum_{t=1}^r v^1_t \otimes v^2_t \otimes v^3_t \right)(v^{i*}) = \Pi_i \left( \sum_{t=1}^s w^1_t \otimes w^2_t \otimes w^3_t \right) (v^{i*}) 
\end{equation*} 
for every $v^{i*} \in V^{i*}$ for $i = 1, 2$ and $3$. Choose bases $\{ e^i_t\}_{t=1}^{n_i}$ of $V^i$ with corresponding dual bases $\{ e^{i*}_t\}_{t=1}^{n_i}$ for each $i$. It is sufficient to show that \begin{equation*}
    \Pi_1 \left(\sum_{t=1}^r v^1_t \otimes v^2_t \otimes v^3_t \right) (e^{1*}_u) = \Pi_1 \left( \sum_{t=1}^s w^1_t \otimes w^2_t \otimes w^3_t \right) (e^{1*}_u) 
\end{equation*} 
for each $e^{1*}_u$ in our dual basis. Let $a^i_{t,u}$ and $b^i_{t,u}$ be scalars such that ${v^i_t = \sum_{u=1}^{n_i} a^i_{t,u} e^i_u}$ and ${w^i_t = \sum_{u=1}^{n_i} b^i_{t,u} e^i_u}$ for all $i$ and $t$. It follows that $\sum_{t=1}^r v^1_t \otimes v^2_t \otimes v^3_t = \sum_{t,i,j,k =1}^{r, n_1, n_2, n_3} a^1_{ti}a^2_{tj}a^3_{tk} \ e^1_i \otimes e^2_j \otimes e^3_k$, which must equal $\sum_{t=1}^s w^1_t \otimes w^2_t \otimes w^3_t = \sum_{t,i,j,k =1}^{s, n_1, n_2, n_3} b^1_{ti}b^2_{tj}b^3_{tk} \ e^1_i \otimes e^2_j \otimes e^3_k$, so $\sum_{t=1}^{r} a^1_{ti}a^2_{tj}a^3_{tk}$ $= \sum_{t=1}^{s} b^1_{ti}b^2_{tj}b^3_{tk}$ for all $i,j,k$. Hence, 
\begin{align*}
    \Pi_1 \left(\sum_{t=1}^r v^1_t \otimes v^2_t \otimes v^3_t \right)(e^{1*}_u) &=  \sum_{t,j,k =1}^{r, n_2, n_3} a^1_{tu}a^2_{tj}a^3_{tk} \ e^2_j \otimes e^3_k \\
     &=  \sum_{j,k =1}^{n_2, n_3} \left( \sum_{t=1}^r a^1_{tu}a^2_{tj}a^3_{tk} \right) \  e^2_j \otimes e^3_k \\
     &=  \sum_{j,k =1}^{n_2, n_3} \left( \sum_{t=1}^s b^1_{tu}b^2_{tj}b^3_{tk} \right) \  e^2_j \otimes e^3_k \\
     &=  \sum_{t,j,k =1}^{s, n_2, n_3} b^1_{tu}b^2_{tj}b^3_{tk} \ e^2_j \otimes e^3_k \\
     &= \Pi_1 \left( \sum_{t=1}^s w^1_t \otimes w^2_t \otimes w^3_t \right)(e^{1*}_u).
\end{align*}
\end{proof}

The relationship between the contraction maps of a tensor generalizes, in a coordinate-free way, the fundamental relationship between the rows and columns of a matrix to hypermatrices. The rank of a two-fold tensor $\tau \in V^1 \otimes V^2$ is equal to the dimension of the  image of $\Pi_1(\tau)$, which is also equal to the dimension of the image of $\Pi_2(\tau)$. We now use contraction maps to show when tensors in the form (\ref{tangent_form_1}) are rank three. First, we need the following lemma.

\begin{lem}\label{independentunique}
Let $V^1$ and $V^2$ be finite dimensional real vector spaces, let $\{x_i^1\}_{i=1}^r$ and $\{y^1_i\}_{i=1}^s$ be linearly independent subsets of $V^1$, and let $\{x^2_i\}_{i=1}^r$ and $\{y^2_i\}_{i=1}^s$ be linearly independent subsets of $V^2$. If ${\sum_{i = 1}^{r} x_i^1 \otimes x_i^2} = {\sum_{i = 1}^{s} y_i^1 \otimes y_i^2}$, then $r = s$.
\end{lem}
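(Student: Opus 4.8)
The plan is to compute the rank of the common two-fold tensor $\tau := \sum_{i=1}^r x_i^1 \otimes x_i^2 = \sum_{i=1}^s y_i^1 \otimes y_i^2$ in two different ways, once from each of its two representations, using the mode-$1$ contraction map $\Pi_1(\tau) \colon V^{1*} \to V^2$. Because the two sums represent the very same tensor, Theorem \ref{contraction_maps_well_defined} guarantees that $\Pi_1(\tau)$ is a single well-defined linear map, so the dimension of its image is unambiguous. The strategy is to show that this dimension equals $r$ when read off the first representation and equals $s$ when read off the second, forcing $r = s$.

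First I would show that $\im \Pi_1(\tau) = \operatorname{span}\{x_1^2, \dots, x_r^2\}$. From the defining formula $\Pi_1(\tau)(v^{1*}) = \sum_{i=1}^r v^{1*}(x_i^1)\, x_i^2$, the image is immediately contained in this span. For the reverse inclusion the key is to isolate each $x_j^2$: since $\{x_i^1\}_{i=1}^r$ is linearly independent, it extends to a basis of $V^1$, and the corresponding element $f_j$ of the dual basis satisfies $f_j(x_i^1) = \delta_{ij}$, whence $\Pi_1(\tau)(f_j) = x_j^2$. Thus every $x_j^2$ lies in the image, establishing equality. Because $\{x_i^2\}_{i=1}^r$ is linearly independent, this span has dimension exactly $r$, so $\dim \im \Pi_1(\tau) = r$.

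Running the identical argument on the second representation $\sum_{i=1}^s y_i^1 \otimes y_i^2$ — using the independence of $\{y_i^1\}_{i=1}^s$ to select the dual functionals and the independence of $\{y_i^2\}_{i=1}^s$ for the final dimension count — yields $\dim \im \Pi_1(\tau) = s$. Since $\Pi_1(\tau)$ is one and the same map, its image dimension cannot be both $r$ and $s$ unless $r = s$, which completes the argument.

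I expect the main obstacle to be the reverse inclusion in the image computation, that is, verifying that $\im \Pi_1(\tau)$ is \emph{all} of $\operatorname{span}\{x_i^2\}$ rather than merely contained in it. This is precisely where the linear independence of $\{x_i^1\}$ is indispensable, since it is what permits the construction of the dual functionals $f_j$ with $f_j(x_i^1) = \delta_{ij}$; the independence of $\{x_i^2\}$ then enters only in the concluding dimension count. Keeping these two distinct uses of the hypotheses separate is the one point that requires care.
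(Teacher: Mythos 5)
Your proposal is correct and follows essentially the same route as the paper: both apply the mode-$1$ contraction with dual functionals $f_j$ satisfying $f_j(x_i^1)=\delta_{ij}$ to extract each $x_j^2$, identify $\operatorname{span}\{x_i^2\}$ with $\operatorname{span}\{y_i^2\}$ (the paper via mutual inclusion, you via the well-defined image of $\Pi_1(\tau)$), and conclude $r=s$ by counting dimensions using the independence of the second-factor sets.
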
   
\begin{proof}
Since $\{x_i^1 \}_{i = 1}^r$ is independent, we can choose $ \{x_j^{1*} \}_{j = 1}^r \subseteq V^{1*}$ such that $x^{1*}_j (x_i^1) = \delta_{ij}$. By taking the mode-$1$ contraction of the tensor with respect to each of the two representations, it follows that 
\begin{equation*}
    \sum_{i = 1}^r \ x^{1*}_j(x_i^1) \ x_i^2 \ = \ \sum_{i = 1}^s x^{1*}_j(y^{1}_i) \ y^2_i  \ \text{ for } j = 1, 2, \dots, r.
\end{equation*} 
This implies $x^2_j = \sum_{i = 1}^s x^{1*}_j(y^1_i) \ y^2_i$ for $j = 1, 2, \dots, r$, so each $x^2_j \in \langle y^2_i \rangle_{i = 1}^s$. Thus, we conclude that the linear space $\langle x^2_j \rangle_{j = 1}^r$ is a subset of $\langle y^2_i \rangle_{i = 1}^s$. Similarly, $\langle y^2_i \rangle_{i = 1}^s \subseteq \langle x^2_j \rangle_{j = 1}^r $, so by the independence of each set, ${r = s}$. \end{proof}

\begin{thm}
Let $V^1$, $V^2$, and $V^3$ be finite dimensional real vector spaces. If the sets $\{x^1_1, x^1_2\}$, $\{ x^2_1, x^2_2\}$, and $\{x^3_1, x^3_2\}$ in $V^1$, $V^2$, and $V^3$, respectively, are linearly independent, then the tensor
\begin{gather*} 
  \beta \ = \ x^1_2 \otimes x^2_1 \otimes  x_1^3 \ + \ x_1^1 \otimes  x_2^2 \otimes  x_1^3 \ + \ x_1^1 \otimes  x_1^2 \otimes  x_2^3 
\end{gather*}  
is rank three.
\end{thm}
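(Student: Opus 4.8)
The plan is to use the mode-$1$ contraction map together with Lemma \ref{independentunique}, converting the rank question for $\beta$ into a question about which two-fold tensors lie in $\im \Pi_1(\beta)$ and which of those are simple. Since $\beta$ is exhibited as a sum of three simple tensors, $\rk(\beta) \le 3$ is automatic, so the entire task is to establish the matching lower bound $\rk(\beta) \ge 3$ by ruling out rank two.

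First I would compute $\im \Pi_1(\beta)$. All three mode-$1$ vectors of $\beta$, namely $x_2^1, x_1^1, x_1^1$, lie in $\langle x_1^1, x_2^1 \rangle$, so choosing $f_1, f_2 \in V^{1*}$ dual to the independent pair $\{x_1^1, x_2^1\}$ yields $\im \Pi_1(\beta) = \langle\, x_2^2\otimes x_1^3 + x_1^2\otimes x_2^3,\ x_1^2\otimes x_1^3 \,\rangle =: W$. The independence of $\{x^2_1,x^2_2\}$ and of $\{x^3_1,x^3_2\}$ makes these two generators linearly independent, so $\dim W = 2$. Because $\tau = \sum_{i=1}^n v^1_i\otimes v^2_i\otimes v^3_i$ forces $\im\Pi_1(\tau)$ into the span of the $n$ tensors $v^2_i\otimes v^3_i$, the dimension of the image of a contraction map is a lower bound for tensor rank (generalizing the two-fold statement recalled above); this already gives $\rk(\beta)\ge 2$ and rules out $\beta$ being simple.

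Next, suppose for contradiction that $\rk(\beta)=2$, say $\beta = y_1^1\otimes y_1^2\otimes y_1^3 + y_2^1\otimes y_2^2\otimes y_2^3$ with both summands nonzero. Then $\im\Pi_1(\beta)$ is spanned by the two simple tensors $y_1^2\otimes y_1^3$ and $y_2^2\otimes y_2^3$; since this image is the two-dimensional space $W$, those two simple tensors must be linearly independent. The crux of the argument, and the step I expect to be the main obstacle, is to show that $W$ contains no two independent simple tensors, indeed that the only simple tensors in $W$ are scalar multiples of $x_1^2\otimes x_1^3$. To see this I would take an arbitrary $w = a\,(x_2^2\otimes x_1^3 + x_1^2\otimes x_2^3) + b\,x_1^2\otimes x_1^3$ in $W$, rewrite it as $(b\,x_1^2 + a\,x_2^2)\otimes x_1^3 + a\,x_1^2\otimes x_2^3$, and examine its own two-fold contraction map $V^{2*}\to V^3$: when $a\ne 0$ the pair $\{x_1^2,\ b\,x_1^2+a\,x_2^2\}$ is independent, so evaluating the contraction against functionals dual to this pair puts both $x_1^3$ and $x_2^3$ in the image, whence $w$ has rank two and is not simple. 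Thus a simple $w\in W$ forces $a=0$, i.e. $w\in\langle x_1^2\otimes x_1^3\rangle$.

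This contradicts the required independence of $y_1^2\otimes y_1^3$ and $y_2^2\otimes y_2^3$: both are simple elements of $W$, hence both multiples of $x_1^2\otimes x_1^3$, hence dependent. Therefore $\rk(\beta)\ne 2$, and combined with $2\le\rk(\beta)\le 3$ we conclude $\rk(\beta)=3$. Geometrically, the obstacle-step computation is precisely the statement that the two-plane $W$ meets the rank-one (Segre) locus in a single line, the tangency phenomenon underlying the fact that $\beta$ is a boundary point of the rank-two tensors.
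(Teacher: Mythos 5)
Your proof is correct, but it reaches the contradiction by a different route than the paper. Both arguments suppose a rank-two decomposition $\beta = y^1_1 \otimes y^2_1 \otimes y^3_1 + y^1_2 \otimes y^2_2 \otimes y^3_2$ and work through the mode-$1$ contraction, but the pivotal step differs. The paper evaluates $\Pi_1(\beta)$ at the single dual functional $x^{1*}_2$, obtaining the rank-one identity $x^2_1 \otimes x^3_1 = x^{1*}_2(y^1_1)\, y^2_1 \otimes y^3_1 + x^{1*}_2(y^1_2)\, y^2_2 \otimes y^3_2$; Lemma \ref{independentunique} then forces one coefficient to vanish, and a two-case analysis followed by further mode-$1$ and mode-$2$ contractions of the resulting two-fold tensors produces the contradiction. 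You instead compute the full image plane $W = \im \Pi_1(\beta) = \langle\, x^2_2 \otimes x^3_1 + x^2_1 \otimes x^3_2,\ x^2_1 \otimes x^3_1 \,\rangle$ and classify its rank-one elements directly: the determinant-style observation that $a(x^2_2 \otimes x^3_1 + x^2_1 \otimes x^3_2) + b\, x^2_1 \otimes x^3_1$ has two-dimensional contraction image whenever $a \neq 0$ shows $W$ meets the simple tensors only along the line $\langle x^2_1 \otimes x^3_1 \rangle$, while a rank-two decomposition would force $W$ to be spanned by two independent simple tensors. This yields the contradiction in one stroke, with no case split and no need for the final mode-$2$ contraction step of the paper's argument (Lemma \ref{independentunique} ends up playing no essential role in your version, despite being announced in your plan). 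What your approach buys is the explicit geometric content --- the plane $W$ is tangent to the Segre variety of $V^2 \otimes V^3$ along a single line, which matches the paper's later discussion of tangency and rank-jumping --- at the cost of a slightly longer computation of $W$ and of its rank-one locus; the two proofs are otherwise of comparable length and both coordinate-free.
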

\begin{proof}
Suppose for contradiction that $\beta$ were rank less than three. That is, suppose that
\begin{gather*} 
  \beta \ = \ y^1_1 \otimes y^2_1 \otimes  y_1^3 \ + \ y_2^1 \otimes  y_2^2 \otimes  y_2^3  
\end{gather*}  
for some vectors $y^i_j$. Since $\im\Pi_i(\beta) = \langle x^i_1, x^i_2 \rangle = \langle y^i_1, y^i_2 \rangle$ for each $i$, it follows that the sets $\{y^i_1, y^i_2\}$ are independent for each $i$. Let $\{x^{1*}_1, x^{1*}_2\}$ be the dual basis of the basis $\{x^1_1, x^1_2\}$ of the subspace $\langle x^1_1, x^1_2 \rangle$ of $V^{1}$. By considering the mode-$1$ contraction of both representations of $\beta$, it follows that
\begin{align*}
\Pi_1(\beta)(x^{1*}_2) = x^{1*}_2(x^1_2)x^2_1 \otimes  x_1^3 \ + \ x^{1*}_2(x_1^1) x_2^2 \otimes  x_1^3 \ + \ x^{1*}_2(x_1^1) x_1^2 \otimes  x_2^3 \ = \  x^{1*}_2(y^1_1) y^2_1 \otimes  y_1^3 \ + \ x^{1*}_2(y_2^1)y_2^2 \otimes  y_2^3.  
\end{align*}
This implies that
\begin{align*}
     x^2_1 \otimes x^3_1 = x^{1*}_2(y^1_1) y^2_1 \otimes y^3_1 \ + \  x^{1*}_2(y_2^1) y^2_2 \otimes y^3_2. 
\end{align*}
Since the sets $\{y^i_1, y^i_2\}$ are independent, Lemma \ref{independentunique} implies that either $x^{1*}_2(y^1_1) = 0$ or $x^{1*}_2(y_2^1) = 0$. We consider the case when $x^{1*}_2(y_2^1) = 0$, and leave the remaining similar case to the reader. It follows that 
\begin{align*}
     \im \Pi_2( \ \Pi_1(\beta)(x^{1*}_2) \ ) = \im \Pi_2(x^2_1 \otimes x^3_1) = \im \Pi_2( \ x^{1*}_2(y^1_1) y^2_1 \otimes y^3_1 \ ),
\end{align*}
which implies that $\langle x^2_1 \rangle = \langle y^2_1 \rangle$. Let $k$ be a scalar such that $y^2_1 = k x^2_1$. Similarly, 
\begin{align*}
     \im \Pi_1( \ \Pi_1(\beta)(x^{1*}_2) \ ) = \im \Pi_1(x^2_1 \otimes x^3_1) = \im \Pi_1( \ x^{1*}_2(y^1_1) y^2_1 \otimes y^3_1 \ ),
\end{align*}
so $\langle x^3_1 \rangle = \langle y^3_1 \rangle$. Finally, we derive a contradiction by considering the mode-$2$ contraction of both representations of $\beta$. 
\begin{align*}
     \Pi_2(\beta)(x^{2*}_2) = \ x^1_1 \otimes x^3_1 & \ = \  x^{2*}_2(y^2_1) y^1_1 \otimes y^3_1 \  + \ x^{2*}_2(y^2_2) y^2_1 \otimes y^3_2 \\
     & \ = \ x^{2*}_2(kx^2_1) y^1_1 \otimes y^3_1 \ + \ x^{2*}_2(y^2_2) y^2_1 \otimes y^3_2 \\
    & \ = \ x^{2*}_2(y^2_2) y^2_1 \otimes y^3_2.
\end{align*}
However, this implies that $y^3_2 \in \langle x^3_1 \rangle$. This is a contradiction, since we have already shown that $y^3_1 \in \langle x^3_1 \rangle$ and the set $\{y^3_1, y^3_2\}$ is independent. We leave it to the reader to check the similar case of $x^{1*}_2(y^1_1) = 0$. 
\end{proof}

We have now shown that when $V^1$, $V^2$, and $V^3$ are finite dimensional real vector spaces and the sets $\{x^i_1, x^i_2\} \subseteq V^i$ are linearly independent, then the tensor
\begin{gather*} 
  \beta = \ x^1_2 \otimes x^2_1 \otimes  x_1^3 \ + \ x_1^1 \otimes  x_2^2 \otimes  x_1^3 \ + \ x_1^1 \otimes  x_1^2 \otimes  x_2^3 \label{tangent_form}
\end{gather*}  
is rank three. However, it is also the limit of the sequence of rank at most two tensors 
\begin{equation*}
    \tau_n = \ n \ (x_1^1 + \frac{1}{n} x_2^1) \otimes  (x_1^2 + \frac{1}{n} x_2^2) \otimes    (x_1^3 + \frac{1}{n} x_2^3) \ - \ n x_1^1 \otimes  x_1^2 \otimes x_1^3, \ n \in \N
\end{equation*}
in the norm topology. Hence, such tensors do not have optimal rank two approximations. We next characterize optimal rank two $2 \times 2 \times 2$ approximations geometrically, and show that, in fact, $\it{all}$ rank three $2 \times 2 \times 2$ tensors do not have optimal rank two approximations with respect to the Frobenius norm.

\section{A Characterization of Optimal Rank Two Approximations}

The set of simple tensors is a variety, and it is called the Segre variety. It is the image of the map 
\begin{align*}
   V^1 \times V^2 \times V^3   \ &\rightarrow \ V^1 \otimes V^2 \otimes  V^3 \\
   v^{1} \times v^2 \times v^3 \ &\mapsto \ v^1 \otimes v^2 \otimes  v^3.
\end{align*}
The Segre variety of the $n_1 \times n_2 \times n_3$ tensor space ${V^1 \otimes V^2 \otimes V^3}$ is denoted $X^{n_1 \times n_2 \times n_3}$ or simply $X$ when the dimensions are clear from context. The tangent space of the Segre variety at a point $\upsilon$ is denoted as $T_\upsilon(X)$, and is characterized in \cite{Ottaviani} as in the following theorem. 

\begin{thm}\label{tangentspace}
Let $V^1$, $V^2$, and $V^3$ be finite $n_1$, $n_2$, $n_3$-dimensional real vector spaces, respectively, and let $x^1_1 \otimes x^2_1 \otimes x^3_1$ be a rank one tensor in the Segre variety $X$. The tangent space of $X$ at $x^1_1 \otimes x^2_1 \otimes x^3_1$ is the space of all tensors in the form
\begin{gather}
   x^1_2 \otimes x^2_1 \otimes x^3_1 \ + \ x^1_1 \otimes x^2_2 \otimes x^3_1 \ + \ x^1_1 \otimes x^2_1 \otimes x^3_2  \label{tangentspace_form}
\end{gather}
for some $x^1_2 \in V^1$, $x^2_2 \in V^2$, and $x^3_2 \in V^3$.
\end{thm}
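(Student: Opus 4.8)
The plan is to present $X$ locally as the image of the Segre parametrization
\[
\phi : V^1 \times V^2 \times V^3 \ \longrightarrow \ V^1 \otimes V^2 \otimes V^3 , \qquad \phi(v^1, v^2, v^3) \ = \ v^1 \otimes v^2 \otimes v^3 ,
\]
and to compute the tangent space at $p \ = \ (x^1_1, x^2_1, x^3_1)$ as the image of the differential $d\phi_p$. Because each $x^i_1 \neq 0$, the tensor $x^1_1 \otimes x^2_1 \otimes x^3_1$ is a smooth point of $X$, so identifying $T_{x^1_1 \otimes x^2_1 \otimes x^3_1}(X)$ with $\im d\phi_p$ is legitimate; it is here that I will need to match dimensions below.

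First I would compute $d\phi_p$ directly. Identifying the tangent space of each $V^i$ with $V^i$ itself, a tangent direction is a triple $(u^1, u^2, u^3)$, and the curve $t \mapsto (x^1_1 + t u^1) \otimes (x^2_1 + t u^2) \otimes (x^3_1 + t u^3)$ lies entirely in $X$. Expanding by multilinearity and differentiating at $t = 0$ annihilates every term of order $t^2$ or higher and leaves
\[
d\phi_p(u^1, u^2, u^3) \ = \ u^1 \otimes x^2_1 \otimes x^3_1 \ + \ x^1_1 \otimes u^2 \otimes x^3_1 \ + \ x^1_1 \otimes x^2_1 \otimes u^3 .
\]
As $(u^1, u^2, u^3)$ ranges over $V^1 \times V^2 \times V^3$, the right-hand side ranges over exactly the tensors of the form (\ref{tangentspace_form}). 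This yields one inclusion at once: every such tensor is the velocity at $t = 0$ of a curve lying in $X$, hence belongs to $T_{x^1_1 \otimes x^2_1 \otimes x^3_1}(X)$; moreover the collection of these tensors is a genuine linear subspace, being the image of the linear map $d\phi_p$.

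For the reverse inclusion I would argue by dimension. Decomposing each $u^i$ as $a_i x^i_1 + \tilde{u}^i$ with $\tilde{u}^i$ in a fixed complement of $\langle x^i_1 \rangle$, the four tensors $x^1_1 \otimes x^2_1 \otimes x^3_1$, $\tilde{u}^1 \otimes x^2_1 \otimes x^3_1$, $x^1_1 \otimes \tilde{u}^2 \otimes x^3_1$, and $x^1_1 \otimes x^2_1 \otimes \tilde{u}^3$ occupy linearly independent portions of the tensor space, so $d\phi_p(u^1, u^2, u^3) = 0$ forces each $\tilde{u}^i = 0$ and $a_1 + a_2 + a_3 = 0$. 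Hence $\ker d\phi_p = \{(a_1 x^1_1, a_2 x^2_1, a_3 x^3_1) : a_1 + a_2 + a_3 = 0\}$ is two-dimensional and $\dim \im d\phi_p = (n_1 + n_2 + n_3) - 2$. Since $\dim X = n_1 + n_2 + n_3 - 2$ and $X$ is smooth at $x^1_1 \otimes x^2_1 \otimes x^3_1$, its tangent space there has the same dimension; together with the inclusion $\im d\phi_p \subseteq T_{x^1_1 \otimes x^2_1 \otimes x^3_1}(X)$ already established, the two spaces coincide.

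The step I expect to be the main obstacle is precisely the justification that the tangent space equals $\im d\phi_p$, which is \emph{not} automatic: $\phi$ fails to be an immersion, its differential carrying the two-dimensional kernel computed above, which reflects the scaling redundancy $(v^1, v^2, v^3) \mapsto (c_1 v^1, c_2 v^2, c_3 v^3)$ with $c_1 c_2 c_3 = 1$ that leaves $\phi$ unchanged. The dimension count is the clean route around this; equivalently, one may restrict $\phi$ to an affine slice transverse to the scaling orbit through $p$, on which $\phi$ becomes an immersion onto a neighborhood of $x^1_1 \otimes x^2_1 \otimes x^3_1$ in $X$, making the image of its differential the tangent space on the nose.
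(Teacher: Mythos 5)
Your proof is correct, but note that the paper itself offers no proof of this theorem to compare against: it is stated as a known characterization imported from the reference \cite{Ottaviani}, so your argument is a genuine addition rather than a variant of an internal one. What you do --- realize the cone $X$ as the image of the multilinear parametrization $\phi$, compute $d\phi_p(u^1,u^2,u^3) = u^1 \otimes x^2_1 \otimes x^3_1 + x^1_1 \otimes u^2 \otimes x^3_1 + x^1_1 \otimes x^2_1 \otimes u^3$ to get the inclusion $\im d\phi_p \subseteq T_{x^1_1 \otimes x^2_1 \otimes x^3_1}(X)$, and then close the gap by a dimension count --- is sound. You correctly identify the one point where the naive argument would break, namely that $\phi$ is not an immersion, and your computation of the kernel as $\{(a_1 x^1_1, a_2 x^2_1, a_3 x^3_1) : a_1 + a_2 + a_3 = 0\}$ (using that the four blocks $\langle x^1_1\rangle \otimes \langle x^2_1\rangle \otimes \langle x^3_1\rangle$, $W^1 \otimes \langle x^2_1\rangle \otimes \langle x^3_1\rangle$, $\langle x^1_1\rangle \otimes W^2 \otimes \langle x^3_1\rangle$, $\langle x^1_1\rangle \otimes \langle x^2_1\rangle \otimes W^3$ are independent) gives $\dim \im d\phi_p = n_1+n_2+n_3-2$, which matches the dimension of the affine cone over the Segre at a smooth (i.e.\ nonzero) point. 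The only things worth making explicit if this were to be written out in full are that $x^1_1 \otimes x^2_1 \otimes x^3_1$ being rank one forces each $x^i_1 \neq 0$ (you do note this), and the standing fact that $\dim X = n_1+n_2+n_3-2$ with all nonzero simple tensors smooth points of the cone, which you invoke but would need to source or prove for a fully self-contained account.
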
 

Tensors in the form of ($\ref{tangentspace_form}$) are precisely the tensors that we previously showed were limit points of the sequence ($\ref{sequence_rank_two}$) of tensors of rank at most two. This gives us geometric insight into the tensor rank-jumping phenomenon. Secant lines of the Segre variety contain rank at most two tensors, and tangent lines are limits of secant lines. Thus, rank three tensors on lines tangent to the Segre variety are limit points of rank at most two tensors.

\begin{figure}[H]
\hspace*{1.5cm} 
    \begin{tikzpicture}
    [dot/.style={circle,inner sep=1pt,fill,label={#1},name=#1},
    extended line/.style={shorten >=-#1,shorten <=-#1},
    extended line/.default=1cm]
    \draw[thick] (0,0) .. controls (1,2) and (2,4) .. (4,2);
    \draw[thick] (4,2) .. controls (7,-1)  .. (8,-1);
    \node [dot= ] at (1.15,2) {};
    \node [dot= ] at (2.3,2.8) {};
    \node [dot= ] at (5,4.69) {};
    \node [dot= ] at (6.5,2.9) {};
    \draw[thick, blue] (-1,2.8) .. controls (2.3,2.8) .. (7,2.90302);
     \draw[thick, red] (0, 1.2)  .. controls (2.3,2.8) .. (6, 5.37);
    \put(200,-20){color{black}$X$}
    \put(-10,130){color{black}$V^1 \otimes V^2 \otimes V^3$}
    \put(180,88){color{black}$ x_1 \otimes x_2 \otimes  y_3 \ + \ x_1 \otimes  y_2 \otimes  x_3 \ + \ y_1 \otimes  x_2 \otimes  x_3$}
    \put(155,137){color{black}$n (x_1 + \frac{1}{n}y_1) \otimes (x_2 + \frac{1}{n}y_2) \otimes  (x_3 + \frac{1}{n} y_3) - nx_1 \otimes x_2 \otimes  x_3$}
    \put(192,110){color{black}$ \searrow n \to \infty$}
    \put(15,88){color{black}$ x_1 \otimes x_2 \otimes  x_3$}
    \put(30,45){color{black}$ n(x_1 + \frac{1}{n}y_1) \otimes (x_2 + \frac{1}{n}y_2) \otimes  (x_3 + \frac{1}{n} y_3)$}
\end{tikzpicture}
    \caption{}
    \label{fig:my_label}
\end{figure}

We have seen that the set of rank at most two tensors is not closed with respect to the norm topology. This implies that it is also not closed with respect to the Zariski topology. This motivates the following definition of the $2^{nd}$ secant variety of the Segre variety.

\begin{defn}
The $2^{nd}$ secant variety of the Segre variety, denoted $\sigma_2(X)$, is the Zariski closure of all the secant lines of the Segre variety $X$. 
\end{defn}

If $\upsilon$ is an optimal rank two approximation of $\tau$ with respect to an inner product norm, then $\upsilon - \tau$ must be orthogonal to the tangent space of $\sigma_2(X)$ at $\upsilon$, which we characterize in the following theorem using Theorem $\ref{tangentspace}$ and Terracini's Lemma \cite{Zak}. 

\begin{thm}\label{tangentspace_2_secant_variety}
Let $V^1$, $V^2$, and $V^3$ be finite $n_1$, $n_2$, $n_3$-dimensional real vector spaces, respectively, and let $\upsilon = x^1_1 \otimes x^2_1 \otimes x^3_1 + x^1_2 \otimes x^2_2 \otimes x^3_2$ be a rank two tensor in $V^1 \otimes V^2 \otimes V^3$. The tangent space of $\sigma_2(X)$ at  $\upsilon$ is the space of all tensors in the form
\begin{gather}\label{second_secant_form}
   x^1_3 \otimes x^2_1 \otimes x^3_1 \ + \ x^1_1 \otimes x^2_3 \otimes x^3_1 \ + \ x^1_1 \otimes x^2_1 \otimes x^3_3 \ + \ x^1_4 \otimes x^2_2 \otimes x^3_2 \ + \ x^1_2 \otimes x^2_4 \otimes x^3_2 \ + \ x^1_2 \otimes x^2_2 \otimes x^3_4  
\end{gather}
for some $x^1_3, x^1_4 \in V^1$, $x^2_3, x^2_4 \in V^2$, and $x^3_3, x^3_4 \in V^3$.
\end{thm}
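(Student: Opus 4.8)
The plan is to invoke Terracini's Lemma to reduce the computation of $T_\upsilon(\sigma_2(X))$ to a sum of two copies of the Segre tangent space already described in Theorem \ref{tangentspace}. Write $\upsilon = p_1 + p_2$ where $p_1 = x^1_1 \otimes x^2_1 \otimes x^3_1$ and $p_2 = x^1_2 \otimes x^2_2 \otimes x^3_2$ are the rank one tensors on the Segre variety $X$ whose sum is $\upsilon$. Terracini's Lemma asserts that the affine tangent space of the second secant variety at a general point of the secant line joining $p_1$ and $p_2$ is the linear span of the two individual tangent spaces,
\begin{equation*}
    T_\upsilon(\sigma_2(X)) \ = \ T_{p_1}(X) + T_{p_2}(X).
\end{equation*}
Since each $T_{p_i}(X)$ is a linear subspace of the affine cone $V^1 \otimes V^2 \otimes V^3$ containing the origin, this span is just their Minkowski sum.

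First I would substitute the explicit form given by Theorem \ref{tangentspace}. It identifies $T_{p_1}(X)$ with the set of tensors $x^1_3 \otimes x^2_1 \otimes x^3_1 + x^1_1 \otimes x^2_3 \otimes x^3_1 + x^1_1 \otimes x^2_1 \otimes x^3_3$ as $x^1_3, x^2_3, x^3_3$ range over $V^1, V^2, V^3$, and identifies $T_{p_2}(X)$ with the tensors $x^1_4 \otimes x^2_2 \otimes x^3_2 + x^1_2 \otimes x^2_4 \otimes x^3_2 + x^1_2 \otimes x^2_2 \otimes x^3_4$. Adding an arbitrary element of the first subspace to an arbitrary element of the second produces exactly the six-term expression in (\ref{second_secant_form}), so the Minkowski sum $T_{p_1}(X)+T_{p_2}(X)$ coincides termwise with the claimed description. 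Combined with the displayed identity from Terracini's Lemma, this proves the theorem.

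The step I expect to be the main obstacle is justifying that the tangent-space identity holds at the particular rank two tensor $\upsilon$, and not merely at a general point of the secant line, since Terracini's Lemma is classically a statement about general points. The inclusion $T_{p_1}(X) + T_{p_2}(X) \subseteq T_\upsilon(\sigma_2(X))$ is automatic: this sum is precisely the image of the differential, at $(x^1_1, x^2_1, x^3_1, x^1_2, x^2_2, x^3_2)$, of the secant parametrization $(a^1,a^2,a^3,b^1,b^2,b^3) \mapsto a^1 \otimes a^2 \otimes a^3 + b^1 \otimes b^2 \otimes b^3$, and the image of such a differential always lies inside the Zariski tangent space of the image variety. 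To upgrade this inclusion to equality I would compare dimensions: one always has $\dim\bigl(T_{p_1}(X)+T_{p_2}(X)\bigr) \le \dim\sigma_2(X) \le \dim T_\upsilon(\sigma_2(X))$, so it suffices to show the first and last quantities agree, that is, that $\upsilon$ is a smooth point of $\sigma_2(X)$ at which the secant parametrization attains maximal rank. This holds whenever $p_1$ and $p_2$ are linearly independent, which is guaranteed since $\upsilon$ has rank two; the degenerate configurations in which paired factors become parallel I would either verify separately or exclude, noting that the statement is only needed for honest rank two points.
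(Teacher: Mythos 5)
Your proposal is correct and follows exactly the route the paper takes: the paper states this theorem without a written-out proof, justifying it only by citing Terracini's Lemma together with the description of $T_{p}(X)$ from Theorem \ref{tangentspace}, which is precisely your decomposition $T_\upsilon(\sigma_2(X)) = T_{p_1}(X) + T_{p_2}(X)$ followed by substitution of the tangent-space form. Your additional care about applying Terracini at the specific point $\upsilon$ rather than a general point of the secant line is a reasonable refinement of a step the paper leaves implicit.
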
 

Until now, we have worked with $n_1 \times n_2 \times n_3$ tensors. The next theorem, however, is our first statement that must be restricted to $2 \times 2 \times 2$ tensors. 

\begin{thm}\label{rank_two_approx_not_generic}
Let $V^1$, $V^2$, and $V^3$ be $2$-dimensional real vector spaces, and let $\tau \in {V^1 \otimes V^2 \otimes V^3}$ be of rank greater than two. If $\upsilon \in {V^1 \otimes V^2 \otimes V^3}$ is an optimal rank two approximation of $\tau$ with respect to an inner product norm, then $\im \Pi_i(\upsilon)$ is not dimension two for some $i = 1,2,$ or $3$. 
\end{thm}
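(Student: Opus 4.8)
The plan is to argue by contraposition: I assume $\upsilon$ is an optimal rank two approximation of $\tau$ and that $\im\Pi_i(\upsilon)$ is two-dimensional for every $i=1,2,3$, and I derive that $\tau$ itself must have rank two, contradicting the hypothesis. Write $\upsilon = x^1_1\otimes x^2_1\otimes x^3_1 + x^1_2\otimes x^2_2\otimes x^3_2$. Since $\im\Pi_i(\upsilon)$ is two-dimensional and $V^i$ is two-dimensional, each pair $\{x^i_1,x^i_2\}$ is linearly independent and in fact spans all of $V^i$. This is the regime in which $\upsilon$ is a genuinely rank two tensor whose three contraction images fill the whole ambient factors, so the tangent space $T_\upsilon(\sigma_2(X))$ from Theorem \ref{tangentspace_2_secant_variety} is as large as possible.

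First I would invoke the optimality condition: because $\upsilon$ minimizes $\|\tau-\upsilon'\|$ over rank at most two tensors with respect to an inner product norm, the residual $\tau-\upsilon$ must be orthogonal to the tangent space $T_\upsilon(\sigma_2(X))$. Using the explicit form (\ref{second_secant_form}) of this tangent space, I get a large family of orthogonality constraints on $\tau-\upsilon$. The strategy is to show these constraints force $\tau-\upsilon$ to lie in a very restricted subspace. Concretely, with $\{x^i_1,x^i_2\}$ spanning each $V^i$, the six families of generators in (\ref{second_secant_form}) (letting $x^i_3$ and $x^i_4$ range over all of $V^i$) span a codimension-small subspace of the eight-dimensional space $V^1\otimes V^2\otimes V^3$; I would compute this span and identify its orthogonal complement. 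The key computation is that the tangent space is spanned by all simple tensors of the form (slot from $\{x^1_1,x^1_2\}$)$\otimes$(slot)$\otimes$(slot) \emph{except} the two ``mixed-diagonal'' directions, and the orthogonal complement is correspondingly low-dimensional.

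Once I know $\tau - \upsilon$ lies in this small orthogonal complement, I would add it back to $\upsilon$ and examine the resulting contraction maps of $\tau$. The decisive step is to show that $\tau = \upsilon + (\tau-\upsilon)$ can be rewritten as a sum of two simple tensors. I expect the residual, in the basis adapted to $\upsilon$, to contribute only to entries that can be absorbed into the two existing rank one summands via an adjustment of $x^1_1,x^1_2$ (or the second-factor vectors). This mirrors the coordinate computation sketched for \cite{Lim}, where $A-B$ was forced into a single two-parameter family and $A$ turned out rank two; here I would run the same conclusion coordinate-free by analysing $\Pi_1(\tau)$ and showing its image is at most two-dimensional \emph{and} that $\tau$ factors accordingly.

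The main obstacle I anticipate is the final rank computation: orthogonality to the tangent space only pins down $\tau-\upsilon$ up to the low-dimensional complement, and I must verify that \emph{every} tensor in $\upsilon + (\text{complement})$ is genuinely rank two, not merely that it has small contraction-map images. The subtlety is that adding a residual orthogonal to the tangent space could in principle raise the rank; I would need to exhibit, for each element of the complement, an explicit rewriting of $\tau$ as two simple tensors (for instance by showing $\Pi_1(\tau)$ has rank two as a matrix $V^{1*}\to V^2\otimes V^3$ and that its image consists of rank one matrices, so that $\tau$ decomposes). Handling this cleanly in the two-dimensional setting, where the complement is explicit, is where the essential work lies, and it is precisely the step that determines whether the coordinate-free method of this paper recovers the conclusion of \cite{Lim}.
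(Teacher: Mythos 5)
Your overall architecture is the same as the paper's (first-order optimality forces $\tau-\upsilon\perp T_\upsilon\sigma_2(X)$, then use the explicit description (\ref{second_secant_form}) of that tangent space), but the pivotal linear-algebra step you describe is wrong, and the rest of your plan is built on it. When all three images $\im\Pi_i(\upsilon)$ are two-dimensional, each pair $\{x^i_1,x^i_2\}$ is a basis of $V^i$, and the six generator families in (\ref{second_secant_form}) span \emph{all} of $V^1\otimes V^2\otimes V^3$, not a proper subspace missing two ``mixed-diagonal'' directions. Indeed, letting $x^1_3$, $x^2_3$, $x^3_3$, $x^1_4$, $x^2_4$, $x^3_4$ range over the bases, the families $x^1_3\otimes x^2_1\otimes x^3_1$, $x^1_1\otimes x^2_3\otimes x^3_1$, $x^1_1\otimes x^2_1\otimes x^3_3$ produce the basis tensors indexed by $111,211,121,112$, and the families $x^1_4\otimes x^2_2\otimes x^3_2$, $x^1_2\otimes x^2_4\otimes x^3_2$, $x^1_2\otimes x^2_2\otimes x^3_4$ produce those indexed by $222,122,212,221$; the two directions you expect to be missing (e.g.\ $x^1_1\otimes x^2_2\otimes x^3_1$ and $x^1_2\otimes x^2_1\otimes x^3_2$) are visibly present. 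This is just the statement that a rank-two $2\times2\times2$ tensor with full contraction images is a point where $\sigma_2(X)$ fills the ambient space. The paper makes this explicit by expanding an arbitrary $\tau=\sum_{i,j,k}c_{ijk}\,x^1_i\otimes x^2_j\otimes x^3_k$ and regrouping it by multilinearity into the form (\ref{second_secant_form}).

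Because the tangent space is everything, its orthogonal complement is $\{0\}$, so the orthogonality condition gives $\tau=\upsilon$ outright, contradicting $\rk\tau>2=\rk\upsilon$; there is no residual family to absorb into the two summands and no rank-two verification to perform. The ``main obstacle'' you anticipate in your last paragraph therefore does not exist, and the computation you defer (identifying the complement and showing every tensor in $\upsilon+(\text{complement})$ has rank two) is exactly the step your proposal gets wrong. As written the argument does not close; it is repaired by replacing your claimed span computation with the correct one, after which the proof collapses to the paper's.
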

\begin{proof}
Suppose for contradiction that $\im \Pi_i(\upsilon)$ were dimension two for $i = 1,2,$ and $3$. Then, there would exist three linearly independent sets $\{ x^i_1, x^i_2  \}$ in $V^i$ for $i = 1, 2, \text{ and } 3$, such that 
\begin{equation*}
    \upsilon \ = \ x^1_1 \otimes x^2_1 \otimes x^3_1 \ + \ x^1_2 \otimes x^2_2 \otimes x^3_2.
\end{equation*}
It follows that the set $\{x^1_i \otimes x^2_j \otimes x^3_k \}_{i, j, k = 1}^{2, 2, 2}$ is a basis of ${V^1 \otimes V^2 \otimes V^3}$, so there must exist constants $c_{ijk}$ such that $\tau = \sum_{i,j,k=1}^{2,2,2} c_{ijk} \ x^1_i \otimes x^2_j \otimes x^3_k$. By multilinearity, 
\begin{align*}
    \tau &= (c_{111} x^1_1 + c_{211}x^1_2) \otimes x^2_1 \otimes x^3_1 \ + \   x^1_1 \otimes c_{121} x^2_2 \otimes x^3_1 \ + \  x^1_1 \otimes x^2_1 \otimes c_{112} x^3_2 \\
    &\hspace{20mm} + \ c_{122}x^1_1 \otimes x^2_2 \otimes x^3_2 \ + \ x^1_2 \otimes c_{212} x^2_1 \otimes x^3_2 \ + \ x^1_2 \otimes x^2_2 \otimes (c_{221}x^3_1 + c_{222}x^3_2).
\end{align*}
Hence, $\tau$ is in the form of ($\ref{second_secant_form}$), and, thus, it is in the tangent space of $\sigma_2(X)$ at $\upsilon$. This implies that $\upsilon - \tau$ is both in $T_{\upsilon}\sigma_2(X)$ and orthogonal to $T_{\upsilon}\sigma_2(X)$, which implies that $\tau = \upsilon$. However, this is a contradiction, since the rank of $\tau$ is not equal to the rank of $\upsilon$ by hypothesis.
\end{proof}

\section{P-Norms and Optimal Rank Two Approximations}

Once bases of vector spaces $V^1$, $V^2$, and $V^3$ are chosen, we can define explicit norms on the tensor space ${V^1 \otimes V^2 \otimes V^3}$. Choose basis $\{ e^i_j \}_{j=1}^{n_i}$ of $V^i$ for $i = 1$, $2$, and $3$, and denote the corresponding dual basis as $\{ e^{i*}_j \}_{j=1}^{n_i}$ also for $i =$ $1$, $2$, and $3$. Let $B$ denote the collection of these bases. For $\tau \in V^1 \otimes V^2 \otimes V^3$, we define the following class of norms for any positive integer $p$. 
\begin{equation*}
    \| \tau \|_{B,p} \ = \ \left( \sum_{i,j,k = 1,1, 1}^{n_1, n_2, n_3} | a_{ijk}|^p  \right)^{\frac{1}{p}},
\end{equation*}
where $\tau = \sum_{i,j,k} a_{ijk} \ e^1_i \otimes e^2_j \otimes e^3_k$. Similarly, for $\kappa \in V^r \otimes V^s$ and  $v^r \in V^r$ for $1 \le r, s \le 3$, we define
\begin{gather*}
     \| \kappa \|_{B, p} \ = \ \left( \sum_{i,j = 1,1}^{n_r, n_s} | a_{ij}|^p  \right)^{\frac{1}{p}}, \text{ and } \ 
    \| v^r \|_{B, p} \ = \ \left( \sum_{i = 1}^{n_r} | a_{i}|^p \right)^{\frac{1}{p}},
\end{gather*}
where ${\kappa = \sum_{i,j} a_{ij} \ e^r_i \otimes e^s_j}$ and ${v^r = \sum_{i} a_{i} \ e^r_i}$. These norms are a convenient choice for working in tensor spaces as they work well with contraction maps.  
\begin{thm}\label{property_contraction_maps_p_norms} Let $V^1$, $V^2$, and $V^3$ be finite $n_1$, $n_2$, and $n_3$-dimensional real vector spaces, and let $B$ be the collection of bases of $V^1$, $V^2$, and $V^3$ defined above. For $\tau \in V^1 \otimes V^2 \otimes V^3$,
\begin{gather*}
    \|\tau \|_{B, p}^p \ = \ \sum_{t=1}^{n_i} \ \| \Pi_i(\tau)(e^{i*}_t) \|_{B, p}^p  
\end{gather*}
for any mode-$i$ contraction $\Pi_i$ for $i = 1, 2, 3$.
\end{thm}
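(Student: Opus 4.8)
The plan is to prove the identity by a direct computation in coordinates, reducing to the mode-$1$ case by symmetry. First I would fix the bases in $B$ and write $\tau = \sum_{i,j,k} a_{ijk}\, e^1_i \otimes e^2_j \otimes e^3_k$, so that by definition $\|\tau\|_{B,p}^p = \sum_{i,j,k} |a_{ijk}|^p$. Because the three modes play symmetric roles in both the expansion of $\tau$ and the definition of the $p$-norm, it suffices to establish the claim for $\Pi_1$; the arguments for $\Pi_2$ and $\Pi_3$ are identical after permuting the roles of the three tensor factors.

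Next I would evaluate the contraction on each dual basis vector. Since $\Pi_1$ is well-defined by Theorem \ref{contraction_maps_well_defined}, I may compute $\Pi_1(\tau)(e^{1*}_t)$ using the basis representation of $\tau$ above rather than some arbitrary representation. Applying the definition of the mode-$1$ contraction and using $e^{1*}_t(e^1_i) = \delta_{ti}$ gives
\begin{align*}
    \Pi_1(\tau)(e^{1*}_t) \ = \ \sum_{i,j,k} a_{ijk}\, e^{1*}_t(e^1_i)\, e^2_j \otimes e^3_k \ = \ \sum_{j,k} a_{tjk}\, e^2_j \otimes e^3_k,
\end{align*}
so that evaluating $\Pi_1(\tau)$ on $e^{1*}_t$ extracts precisely the $t$-th slice of the coefficient array. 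Its norm is then $\|\Pi_1(\tau)(e^{1*}_t)\|_{B,p}^p = \sum_{j,k} |a_{tjk}|^p$ directly from the definition of the $p$-norm on $V^2 \otimes V^3$.

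Finally I would sum over $t$ and reindex the resulting triple sum:
\begin{align*}
    \sum_{t=1}^{n_1} \|\Pi_1(\tau)(e^{1*}_t)\|_{B,p}^p \ = \ \sum_{t=1}^{n_1} \sum_{j,k} |a_{tjk}|^p \ = \ \sum_{i,j,k} |a_{ijk}|^p \ = \ \|\tau\|_{B,p}^p,
\end{align*}
which is the desired identity. This computation has no substantive obstacle; the only points requiring care are the appeal to well-definedness (Theorem \ref{contraction_maps_well_defined}), which licenses computing the contraction from the chosen basis representation, and the bookkeeping needed to see that contracting against $e^{1*}_t$ isolates exactly the coefficients $a_{tjk}$ whose $p$-th powers, once summed over $t$, reconstitute the full sum defining $\|\tau\|_{B,p}^p$. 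The result is essentially the statement that the $p$-norm tensorizes across the mode-$i$ slices, and I expect the write-up to be short.
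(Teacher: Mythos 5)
Your proposal is correct and follows essentially the same route as the paper: a direct coordinate computation showing that the triple sum defining $\|\tau\|_{B,p}^p$ splits into mode-$1$ slices, each of which is $\|\Pi_1(\tau)(e^{1*}_t)\|_{B,p}^p$. The only cosmetic difference is that the paper starts from an arbitrary decomposition $\sum_s v^1_s\otimes v^2_s\otimes v^3_s$ and expands it in the bases, whereas you start directly from the basis representation and cite well-definedness; both are the same calculation.
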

\begin{proof}
Without loss of generality, we prove the theorem for the the mode-$1$ contraction. Suppose $ \tau = {\sum_{s=1}^r v^1_s \otimes v^2_s \otimes v^3_s}$ for some vectors $v^i_s$, and let $a^i_{s,t}$ be scalars such that ${v^i_s = \sum_{t=1}^{n_i} a^i_{s,t} e^i_t}$ for all $i$ and $s$. It follows that 
\begin{equation*}
\sum_{s=1}^r v^1_s \otimes v^2_s \otimes v^3_s  \ = \ \sum_{i,j,k =1}^{n_1, n_2, n_3} \left( \sum_{s=1}^r a^1_{si}a^2_{sj}a^3_{sk} \right) \ e^1_i \otimes e^2_j \otimes e^3_k.
\end{equation*}
Hence, 
\begin{align*}
\| \tau \|_{B,p}^p &\ = \ \sum_{i,j,k =1}^{n_1, n_2, n_3} \left| \sum_{s=1}^r  a^1_{si}a^2_{sj}a^3_{sk} \right|^p \\
&\ = \ \sum_{i=1}^{n_1} \sum_{j,k =1}^{ n_2, n_3} \left| \sum_{s=1}^r  a^1_{si}a^2_{sj}a^3_{sk} \right|^p \ = \ \sum_{i=1}^{n_1} \| \Pi_1(\tau)(e^{1*}_i) \|_{B,p}^p.
\end{align*}  
\end{proof}

\noindent Furthermore, when $p = 2$, the norm $\| \cdot \|_{B, 2}$ is induced by the inner product
\begin{align*}
    \langle \tau, \upsilon \rangle_{B, 2} &= \sum_{i,j,k} a_{ijk} b_{ijk},
\end{align*}
where $\tau = {\sum_{ijk} a_{ijk} \ e^1_i \otimes e^2_j \otimes e^3_k}$ and $\upsilon = \sum_{ijk} b_{ijk} \ e^1_i \otimes e^2_j \otimes e^3_k$. The norm $\| \cdot \|_{B, 2}$ is called the Frobenius norm with respect to bases $B$. 

\begin{thm}\label{Frobenius_norm}
The Frobenius inner product has the following property on rank one tensors.
\begin{align*}
    &\langle  v^1_1 \otimes v^2_1 \otimes v^3_1 \mid v^1_2 \otimes v^2_2 \otimes v^3_2 \rangle_{B, 2} \ = \ \langle v^1_1 \mid v^1_2\rangle_{B, 2} \ \langle v^2_1 \mid v^2_2 \rangle_{B, 2} \ \langle v^3_1 \mid v^3_2 \rangle_{B, 2} \label{Frobenius_property}
\end{align*} 
for any rank one tensors $v^1_1 \otimes v^2_1 \otimes v^3_1, \ v^1_2 \otimes v^2_2 \otimes v^3_2 \in V^1 \otimes V^2 \otimes V^3$.
\end{thm}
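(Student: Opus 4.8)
The plan is to prove the identity by a direct computation in the coordinates supplied by the bases $B$, since the Frobenius inner product $\langle \cdot, \cdot \rangle_{B, 2}$ is defined entrywise. First I would fix coordinate expansions $v^s_1 = \sum_{i} a^s_i e^s_i$ and $v^s_2 = \sum_{i} b^s_i e^s_i$ for $s = 1, 2, 3$. By the coordinitization of simple tensors recalled in the introduction, the coefficient of the basis tensor $e^1_i \otimes e^2_j \otimes e^3_k$ in $v^1_1 \otimes v^2_1 \otimes v^3_1$ is the product $a^1_i a^2_j a^3_k$, and likewise its coefficient in $v^1_2 \otimes v^2_2 \otimes v^3_2$ is $b^1_i b^2_j b^3_k$.

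Substituting these coefficients into the definition of the Frobenius inner product and then separating the triple sum according to its three independent index sets gives the central step
\begin{gather*}
\langle v^1_1 \otimes v^2_1 \otimes v^3_1 \mid v^1_2 \otimes v^2_2 \otimes v^3_2 \rangle_{B, 2} \ = \ \sum_{i,j,k} a^1_i a^2_j a^3_k \, b^1_i b^2_j b^3_k \ = \ \Big( \sum_i a^1_i b^1_i \Big) \Big( \sum_j a^2_j b^2_j \Big) \Big( \sum_k a^3_k b^3_k \Big),
\end{gather*}
where the factorization of the sum into a product of three sums is justified by distributivity of multiplication over addition. It then remains only to recognize each of the three factors as the coordinate inner product $\langle v^s_1 \mid v^s_2 \rangle_{B, 2}$ on $V^s$, which is exactly their definition, and the proof is complete.

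There is no genuine obstacle here: the identity is essentially the statement that the Frobenius form on $V^1 \otimes V^2 \otimes V^3$ is the tensor product of the coordinate inner products on the three factor spaces. The only point requiring any care is the clean separation of the summation index $(i,j,k)$ into a product of three single sums, which is immediate once the simple-tensor coefficients are written as products $a^1_i a^2_j a^3_k$ and $b^1_i b^2_j b^3_k$. I would therefore present the computation compactly, as displayed above, rather than tracking every index by hand, since the force of the result is precisely this multiplicative separation that the bilinear inner-product structure makes available.
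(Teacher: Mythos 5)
Your proposal is correct and follows essentially the same route as the paper's own proof: expand each vector in the chosen bases, observe that the simple tensors have product coefficients $a^1_i a^2_j a^3_k$ and $b^1_i b^2_j b^3_k$, and factor the resulting triple sum into a product of three single sums. The only difference is notational (you write $a^s_i$, $b^s_i$ where the paper uses $a^i_{1,s}$, $a^i_{2,s}$), so no further comment is needed.
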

\begin{proof}
Let $a^i_{t,s}$ be scalars such that ${v^i_t = \sum_{t=1}^{n_i} a^i_{t,s} e^i_s}$ for $i = 1,2,3$ and $t = 1,2$. It follows that 
\begin{align*}
v^1_1 \otimes v^2_1 \otimes v^3_1 &\ = \ \sum_{i,j,k =1}^{n_1, n_2, n_3} a^1_{1i}a^2_{1j}a^3_{1k} \ e^1_i \otimes e^2_j \otimes e^3_k, \text{ and  } \\
v^1_2 \otimes v^2_2 \otimes v^3_2 &\ = \ \sum_{i,j,k =1}^{n_1, n_2, n_3} a^1_{2i}a^2_{2j}a^3_{2k} \ e^1_i \otimes e^2_j \otimes e^3_k. 
\end{align*}
Hence,
\begin{align*}
\langle  v^1_1 \otimes v^2_1 \otimes v^3_1 \mid v^1_2 \otimes v^2_2 \otimes v^3_2 \rangle_{B, 2} &\ = \ \sum_{i,j,k =1}^{n_1, n_2, n_3} a^1_{1i}a^2_{1j}a^3_{1k} a^1_{2i} a^2_{2j} a^3_{2k} \\
& \ = \ \left(\sum_{i =1}^{n_1} a^1_{1i} a^1_{2i} \right) \left(\sum_{j =1}^{n_2} a^2_{1j} a^2_{2j} \right) \left(\sum_{k =1}^{n_3} a^3_{1k} a^3_{2k} \right)\\
& \ = \ \langle v^1_1 \mid v^1_2\rangle_{B, 2} \ \langle v^2_1 \mid v^2_2 \rangle_{B, 2} \ \langle v^3_1 \mid v^3_2 \rangle_{B, 2}. 
\end{align*}  
\end{proof}

An optimal rank $r$ approximation could be rank strictly less than $r$ by our definition. We now show that an optimal rank $r$ approximations with respect to $p$-norms must be rank $r$. This theorem is modified from \cite[Lemma 8.2]{Lim}. 

\begin{thm}\label{rank_less_than_or_equal_r_approx_ring}
Let $V^i$ be finite $n_i$-dimensional real vector spaces, and let $\tau \in V^1 \otimes V^2 \otimes V^3$ have rank greater than $r$. If $\upsilon$ is an optimal rank $r$ approximation of $\tau$ with respect to $\| \cdot \|_{B, p}$, then $\upsilon$ must be rank $r$. 
\end{thm}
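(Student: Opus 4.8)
The plan is to argue by contradiction, perturbing $\upsilon$ along a single coordinate direction so as to strictly decrease the error while keeping the rank at most $r$. Suppose $\upsilon$ is an optimal rank $r$ approximation of $\tau$ but $\rk(\upsilon) = s < r$. Since $\tau$ has rank greater than $r$, in particular $\tau \neq \upsilon$, so the error tensor $\eta := \tau - \upsilon$ is nonzero. Writing $\eta = \sum_{i,j,k} c_{ijk}\, e^1_i \otimes e^2_j \otimes e^3_k$ with respect to the basis collection $B$, there is at least one index $(i_0, j_0, k_0)$ with $c_{i_0 j_0 k_0} \neq 0$.

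The key idea is to nudge $\upsilon$ in the direction of that one basis simple tensor. Set $w = \operatorname{sign}(c_{i_0 j_0 k_0})\, e^1_{i_0} \otimes e^2_{j_0} \otimes e^3_{k_0}$, which is a simple (hence rank at most one) tensor, and consider $\upsilon_t = \upsilon + t\,w$ for small $t > 0$. Because $t\,w$ is again simple, subadditivity of tensor rank gives $\rk(\upsilon_t) \le \rk(\upsilon) + 1 = s + 1 \le r$, so each $\upsilon_t$ is an admissible competitor in the infimum defining the optimal rank $r$ approximation. It then remains only to show that $\upsilon_t$ is strictly closer to $\tau$ than $\upsilon$ is.

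Here I would compute directly from the definition of $\| \cdot \|_{B,p}$. Since $w$ has exactly one nonzero coordinate, all summands of $\| \tau - \upsilon_t \|_{B,p}^p = \| \eta - t\,w \|_{B,p}^p$ except one agree with the corresponding summands of $\| \eta \|_{B,p}^p$, while the $(i_0,j_0,k_0)$ summand becomes $\bigl| c_{i_0 j_0 k_0} - t\,\operatorname{sign}(c_{i_0 j_0 k_0}) \bigr|^p = \bigl( |c_{i_0 j_0 k_0}| - t \bigr)^p$ for $0 < t < |c_{i_0 j_0 k_0}|$. This is strictly smaller than $|c_{i_0 j_0 k_0}|^p$, so $\| \tau - \upsilon_t \|_{B,p} < \| \tau - \upsilon \|_{B,p}$, contradicting the optimality of $\upsilon$. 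Hence $\rk(\upsilon) \ge r$, and combined with $\rk(\upsilon) \le r$ from the definition of an optimal rank $r$ approximation, we conclude $\rk(\upsilon) = r$.

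I expect the main subtlety to be keeping the argument uniform across all positive integers $p$, including $p = 1$, where $|\cdot|^p$ fails to be differentiable at the origin. Choosing $w$ supported on a single coordinate deliberately sidesteps any appeal to calculus: the entire inequality collapses to the elementary scalar fact that $|c| - t < |c|$, which holds simultaneously for every $p \ge 1$. The only other point worth verifying is that scaling a simple tensor by $t$ leaves it simple, so that the rank bound $\rk(\upsilon_t) \le s+1$ genuinely holds and $\upsilon_t$ remains a legitimate rank $r$ competitor.
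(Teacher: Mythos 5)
Your argument is correct and is essentially the paper's own proof: both perturb $\upsilon$ by a multiple of a single basis simple tensor $e^1_{i_0}\otimes e^2_{j_0}\otimes e^3_{k_0}$ at a coordinate where the error is nonzero, note that this keeps the rank at most $r$ because $\rk(\upsilon)<r$, and observe that exactly one summand of the $p$-th power of the norm strictly decreases. The only cosmetic difference is that the paper cancels that coordinate of the error entirely rather than nudging by a small $t$, which makes the inequality one line shorter but changes nothing essential.
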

\begin{proof}
Suppose for contradiction that there existed an optimal rank $r$ approximation $\upsilon$ with rank strictly less than $r$. Let $a_{ijk}$ and $b_{ijk}$ be scalars such that $\tau = \sum_{ijk} a_{ijk} \ e^1_i \otimes e^2_j \otimes e^3_k$ and $\upsilon = \sum_{ijk} b_{ijk} \ e^1_i \otimes e^2_j \otimes e^3_k$. Since $\tau$ and $\upsilon$ have different ranks, there must exists some triple $(\alpha,\beta,\gamma)$ such that $a_{\alpha \beta \gamma} \neq b_{\alpha \beta \gamma}$. The tensor $\upsilon' = \upsilon + (a_{\alpha \beta \gamma} - b_{\alpha \beta \gamma}) e^1_{\alpha} \otimes e^2_{\beta} \otimes e^3_{\gamma}$ is rank less than or equal to $r$ by construction. It follows that
\begin{align*}
    \| \tau - \upsilon' \|_{B,p} &= \| \tau - \left(\upsilon + (a_{\alpha \beta \gamma} - b_{\alpha \beta \gamma}) e^1_{\alpha} \otimes e^2_{\beta} \otimes e^3_{\gamma} \right) \|_{B,p} \\
    &= \left( \sum_{ijk \ne \alpha \beta \gamma} |a_{ijk} - b_{ijk}|^p \right)^{\frac{1}{p}} \\
    &< \left( \sum_{ijk} |a_{ijk} - b_{ijk}|^p \right)^{\frac{1}{p}} =  \| \tau - \upsilon \|_{B,p},
\end{align*}
which contradicts that $\upsilon$ is an optimal rank $r$ approximation of $\tau$. 
\end{proof}

If $\upsilon$ is an optimal rank two approximation of $\tau$ with respect to the Frobenius norm, then the contractions of $\upsilon$ must be related to the contractions of $\tau$ in the following way.

\begin{thm}\label{projections_rank_two_approx}
Let $V^1$, $V^2$, and $V^3$ be $n_1$, $n_2$, and $n_3$-dimensional real vector spaces, respectively. Let $B$ denote the collection of bases $\{ e^i_j \}_{j=1}^{n_i}$ of $V^i$ for $i =$ $1$, $2$, and $3$. Furthermore, denote the corresponding dual bases as $\{ e^{i*}_j \}_{j=1}^{n_i}$ for $i =$ $1$, $2$, and $3$. Let $\tau \in {V^1 \otimes V^2 \otimes V^3}$ be of rank greater than or equal to two, and let $\upsilon \in {V^1 \otimes V^2 \otimes V^3}$ be an optimal rank two approximation of $\tau$ with respect to the Frobenius norm $\| \cdot \|_{B, 2}$. Let $P_{\im \Pi_i(\upsilon)}$ denote the projection onto the image of the mode-$i$ contraction of $\upsilon$. It follows that
\begin{equation*}
   P_{\im \Pi_i(\upsilon)}(\Pi_i(\tau)(e^{i*}_j)) = \Pi_i(\upsilon)(e^{i*}_j) \text{ for any } i, j.
\end{equation*}
\end{thm}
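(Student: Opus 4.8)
The plan is to exploit Theorem~\ref{property_contraction_maps_p_norms}, which for $p=2$ splits the squared Frobenius distance mode by mode. Fix an index $i$, let $\{r,s\} = \{1,2,3\} \setminus \{i\}$, and set $W := \im \Pi_i(\upsilon) \subseteq V^r \otimes V^s$. First I would record that $\Pi_i$ is linear in its tensor argument (immediate from the defining formula together with Theorem~\ref{contraction_maps_well_defined}), so that for every competitor $\upsilon'$,
\begin{equation*}
\|\tau - \upsilon'\|_{B,2}^2 \ = \ \sum_{j=1}^{n_i} \ \bigl\|\Pi_i(\tau)(e^{i*}_j) - \Pi_i(\upsilon')(e^{i*}_j)\bigr\|_{B,2}^2 .
\end{equation*}
The strategy is then to restrict attention to competitors $\upsilon'$ whose mode-$i$ contraction takes values in $W$, because for those the summands decouple and can be optimized independently.

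The crux is the following claim: every tensor $\eta$ with $\im \Pi_i(\eta) \subseteq W$ has rank at most two. To prove it I would first verify the slicing identity $\eta = \sum_{j} e^i_j \otimes \Pi_i(\eta)(e^{i*}_j)$ (with $e^i_j$ inserted into the $i$-th factor) by expanding $\eta$ in the basis $B$; this shows that the tensors with $\im \Pi_i(\eta) \subseteq W$ are exactly those of the form $\sum_j e^i_j \otimes w_j$ with $w_j \in W$. Next, writing a rank-at-most-two decomposition $\upsilon = u_1 \otimes m_1 + u_2 \otimes m_2$ with $u_k \in V^i$ and $m_k \in V^r \otimes V^s$ simple, the defining formula for $\Pi_i$ gives $W = \im \Pi_i(\upsilon) \subseteq \langle m_1, m_2 \rangle$. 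Hence each slice $w_j$ lies in $\langle m_1, m_2 \rangle$, and collecting coefficients rewrites $\eta$ as $b_1 \otimes m_1 + b_2 \otimes m_2$ for some $b_1, b_2 \in V^i$, a sum of two simple tensors. This is the main obstacle, and the decisive structural point is that the image of the mode-$i$ contraction of a rank-two tensor sits inside the span of two simple tensors, so enlarging the $i$-th factor never raises the rank above two.

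With the claim established I would finish as follows. The family $\{\eta : \im \Pi_i(\eta) \subseteq W\}$ consists of rank-at-most-two tensors, contains $\upsilon$ itself (since $\im \Pi_i(\upsilon) = W$), and is parametrized by arbitrary slices $w_j \in W$ via $\Pi_i(\eta)(e^{i*}_j) = w_j$. Because $\upsilon$ is optimal over all rank-at-most-two tensors, it is in particular optimal over this subfamily; but over this subfamily the displayed sum splits into independent terms $\|\Pi_i(\tau)(e^{i*}_j) - w_j\|_{B,2}^2$, each minimized over $w_j \in W$ precisely by the orthogonal projection $w_j = P_W(\Pi_i(\tau)(e^{i*}_j))$ by the projection theorem for the Frobenius inner product on $V^r \otimes V^s$. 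Uniqueness of the orthogonal projection then forces $\Pi_i(\upsilon)(e^{i*}_j) = P_{\im \Pi_i(\upsilon)}(\Pi_i(\tau)(e^{i*}_j))$ for every $j$, which is the assertion; since $i$ was arbitrary, it holds for all $i$.
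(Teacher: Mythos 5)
Your proposal is correct and takes essentially the same approach as the paper: both arguments rest on the observation that any tensor whose mode-$i$ slices lie in $\im \Pi_i(\upsilon) \subseteq \langle m_1, m_2\rangle$ still has rank at most two, combined with the mode-wise Pythagorean splitting of Theorem~\ref{property_contraction_maps_p_norms} to reduce the optimization to independent slice-by-slice projections. The only cosmetic difference is that you optimize all slices simultaneously over the competitor family, whereas the paper perturbs one slice at a time and derives a contradiction.
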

\begin{proof}
As $\upsilon$ is rank two, there must exist vectors $x_j^i$ such that
\begin{align*}
    \upsilon &= \ x^1_1 \otimes x^2_1 \otimes x^3_1 \ + \ x^1_2 \otimes x^2_2 \otimes x^3_2.
\end{align*}
Without loss of generality, we prove the theorem for $i$ and $j$ both equal to $1$. First, note that the image of $\Pi_1(\upsilon)$ contains vectors in the form $ax^2_1 \otimes x^3_1 + bx^2_2 \otimes x^3_2$ for some constants $a$ and $b$. If the set $\{x^1_1, x^1_2\}$ is independent, it can be extended to a basis $\{x^1_j\}_{j=1}^{n_1}$ with dual basis $\{x^{1*}_j\}_{j=1}^{n_1}$. It then follows that the image of $\Pi_1(\upsilon)$ is the span of $\Pi_1(\upsilon)(x^{1*}_1)$ $= $ $x^2_1 \otimes x^3_1$ and $\Pi_1(\upsilon)(x^{1*}_2)$ $= $ $x^2_2 \otimes x^3_2$, so every element in the image of $\Pi_1(\upsilon)$ can indeed be written as  $ax^2_1 \otimes x^3_1 + bx^2_2 \otimes x^3_2$ for some constants $a$ and $b$. On the other hand, if $x^1_2 = kx^1_1$ for some constant $k$, then the set $\{x^1_1\}$ can similarly be extended to a basis $\{x^1_1, y^1_j\}_{j=2}^{n_1}$ of $V^1$ with the corresponding dual basis $\{x^{1*}_1, y^{1*}_j\}_{j=2}^{n_1}$. It follows that the image of $\Pi_1(\upsilon)$ is the span of $\Pi_1(\upsilon)(x^{1*}_1)$ $= $ $x^2_1 \otimes x^3_1$  $+$ $kx^2_2 \otimes x^3_2$. Hence, in this case, it is also true that every element in the image of $\Pi_1(\upsilon)$ can be written as $ax^2_1 \otimes x^3_1 + bx^2_2 \otimes x^3_2$ for some constants $a$ and $b$. 

Suppose for contradiction that $P_{\im \Pi_1(\upsilon)}(\Pi_1(\tau)(e^{1*}_1)) \neq$ $\Pi_1(\upsilon)(e^{1*}_1)$. Let $\alpha$ and $\beta$ be scalars such that
\begin{gather*}
    P_{\im \Pi_1(\upsilon)}(\Pi_1(\tau)(e^{1*}_1)) \ = \ \alpha \ x^2_1 \otimes x^3_1 \ + \ \beta \ x^2_2 \otimes x^3_2.
\end{gather*}
Furthermore, let $a_i$ and $b_i$ be scalars such that
\begin{gather*}
   \Pi_1(\upsilon)(e^{1*}_i) \ = \ a_i \ x^2_1 \otimes x^3_1 \ + \ b_i \ x^2_2 \otimes x^3_2
\end{gather*}
for $i = 2, \ldots, n_1$. Define $\upsilon' \in {V^1 \otimes V^2 \otimes V^3}$ as the unique tensor with the following mode-$1$ contraction:
\begin{align*}
\Pi_1(\upsilon')(e^{1*}_1) \ &= \ \alpha \ x^2_1 \otimes x^3_1 \ + \ \beta \ x^2_2 \otimes x^3_2 \ = \ P_{\im \Pi_1(\upsilon)}(\Pi_1(\tau)(e^{1*}_1)), \\
\Pi_1(\upsilon')(e^{1*}_i) \ &= \ a_i \ x^2_1 \otimes x^3_1 \ + \ b_i \ x^2_2 \otimes x^3_2 \ = \ \Pi_1(\upsilon)(e^{1*}_i) \ \text{ for } i = 2, \ldots, n_1.
\end{align*}
It follows that 
\begin{align*}
    \upsilon' &\ = \ (\alpha e^1_1 + \sum_{j=2}^{n_1} a_j e^1_j) \otimes x^2_1 \otimes x^3_1 \ + \ (\beta e^1_1 + \sum_{j=2}^{n_1} b_j e^1_j) \otimes x^2_2 + \otimes x^3_2,
\end{align*}
so $\upsilon'$ is rank $\le 2$. Furthermore, 
\begin{align}
    \| \tau - \upsilon' \|_{B,2}^2 &\ = \ \sum_{i=1}^{n_1} \| \Pi_1(\tau)(e^{1*}_i)  -  \Pi_1(\upsilon')(e^{1*}_i) \|_{B,2}^2 \label{proof_line_1}\\
     &\ = \ \| \Pi_1(\tau)(e^{1*}_1) - \Pi_1(\upsilon')(e^{1*}_1) \|_{B,2}^2 \ + \ \sum_{i=2}^{n_1} \| \Pi_1(\tau)(e^{1*}_i) - \Pi_1(\upsilon)(e^{1*}_i) \|_{B,2}^2 \label{proof_line_2}\\
     &\ < \ \| \Pi_1(\tau)(e^{1*}_1) - \Pi_1(\upsilon)(e^{1*}_1) \|_{B,2}^2 \ + \ \sum_{i=2}^{n_1} \| \Pi_1(\tau)(e^{1*}_i) - \Pi_1(\upsilon)(e^{1*}_i) \|_{B,2}^2 \label{proof_line_3}\\
     & \ = \ \| \tau - \upsilon \|_{B,2}^2. \nonumber
\end{align}

Equation ($\ref{proof_line_1}$) follows from Theorem $\ref{property_contraction_maps_p_norms}$, and equation ($\ref{proof_line_2}$) follows from the fact that $\Pi_1(\upsilon')(e^{1*}_i) = \Pi_1(\upsilon)(e^{1*}_i)$ for $i = 2, 3, \ldots, n_1$. Equation ($\ref{proof_line_3}$) follows from our hypothesis that $\alpha x^2_1 \otimes x^3_1 + \beta x^2_2 \otimes x^3_2$ is a better approximation of $\Pi_1(\tau)(e^{1*}_1)$ with respect to $\| \cdot \|_{B,2}$ than $\Pi_1(\upsilon)(e_1^{1*})$. Hence, $\| \tau - \upsilon' \|_{B,2} < \| \tau - \upsilon \|_{B,2}$, contradicting that $\upsilon$ is a best rank two approximation of $\tau$.
\end{proof}

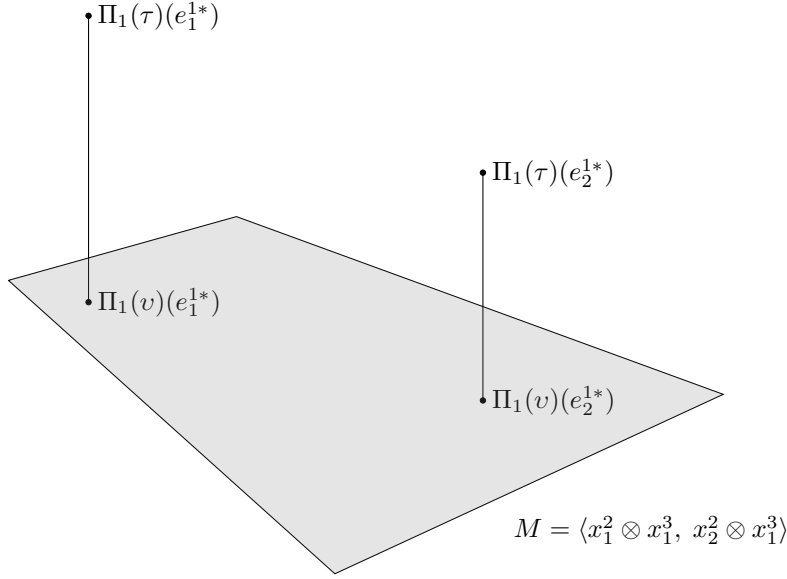
\begin{figure}
    \centering
\tdplotsetmaincoords{60}{125}
\begin{tikzpicture}[tdplot_main_coords]
\coordinate (c) at (0, 0.-1, 2.7);
\coordinate (a) at (0, 0.-1, -1.7){};
\filldraw [black] (a) circle (1pt);
\filldraw [black] (c) circle (1pt);
\node [anchor = west] at (a) {$\Pi_1(\upsilon)(e_1^{1*})$};
\node [anchor = west] at (c) {$\Pi_1(\tau)(e_1^{1*})$};
\draw (a) -- (c);

\coordinate (f) at (-2, 4, 1);
\coordinate (d) at (-2, 4, -2.5){};
\filldraw [black] (d) circle (1pt);
\filldraw [black] (f) circle (1pt);
\node [anchor = west] at (d) {$\Pi_1(\upsilon)(e_2^{1*})$};
\node [anchor = west] at (f) {$\Pi_1(\tau)(e_2^{1*})$};
\draw (d) -- (f);

\coordinate (x) at (-1, -3, -2.5){};
\coordinate (y) at (-2, 0, -1){};
\coordinate (z) at (10, 10, 2.5){};
\coordinate (w) at (1, 10, 1){};
\draw [black] (x) -- (y) -- (w) -- (z) -- (x);

   \begin{scope}[canvas is yz plane at x=0]
     \fill[gray,fill opacity=0.2,name path global=plane] (x) -- (y) -- (w) -- (z) -- cycle; 
   \end{scope}

\coordinate (h) at (2.5, 7.5,-1.2){};
\node [anchor = west] at (h) {$M = \langle x^2_1 \otimes x^3_1, \ x^2_2 \otimes x^3_1 \rangle $};
\coordinate (r) at (2.5, 7.5,-2){};
\node [anchor = west] at (r) {};
\end{tikzpicture}
\parbox{5in}{\caption{In this figure, we consider the $2 \times 2 \times 2$ case. Let $\upsilon = {x^1_1 \otimes x^2_1 \otimes x^3_1} + {x^1_2 \otimes x^2_2 \otimes x^3_1}$ be an optimal rank two approximation of rank three tensor ${\tau \in V^1 \otimes V^2 \otimes V^3}$. In this case, $\im \Pi_1(\upsilon)$ is a plane in $V^2 \otimes V^3$ that contains all the rank one tensors in the span of $x^2_1 \otimes x^3_1$ and $x^2_2 \otimes x^3_1$. This plane is denoted as $M$ in the figure. Theorem \ref{projections_rank_two_approx} shows that the projection of $\Pi_1(\tau)(e^{1*}_i)$ onto the plane $M$ must be $\Pi_1(\upsilon)(e^{1*}_i)$ for $i = 1, 2$.}\label{projection_fig}}
\end{figure}

\section{Optimal Rank Two Approximations of Rank Three $2 \times 2 \times 2$ Tensors Do Not Exist With Respect to the Frobenius Norm}

In this section, we show that rank three $2 \times 2 \times 2$ tensors over $\R$ do not have optimal rank two approximations with respect to the Frobenius norm. Given any rank two tensor $\upsilon$ and rank three tensor $\tau$, we construct a rank two tensor $\upsilon'$ that is a better approximation of $\tau$ than $\upsilon$ with respect to the Frobenius norm. Let $V^i$ be two-dimensional real vector spaces for $i = 1, 2, 3$. As in the previous section, let $B$ denote a collection of bases $\{ e^i_j \}_{j=1}^{2}$ of $V^i$ for $i =$ $1$, $2$, $3$. Furthermore, denote the corresponding dual bases as $\{ e^{i*}_j \}_{j=1}^2$ for $i =$ $1$, $2$, $3$.

\begin{thm}\label{best_approx_dont_exists}
If $\tau \in V^1 \otimes V^2 \otimes V^3$ is rank three, then there does not exist an optimal rank two approximation of $\tau$ with respect to $\| \cdot \|_{B, 2}$. 
\end{thm}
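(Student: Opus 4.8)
The plan is to argue by contradiction: suppose some rank two tensor $\upsilon$ is an optimal rank two approximation of the rank three tensor $\tau$, and manufacture from it a genuinely rank two tensor $\upsilon'$ with $\|\tau - \upsilon'\|_{B,2} < \|\tau - \upsilon\|_{B,2}$. The two preceding structural theorems carry out most of the reduction. By Theorem \ref{rank_less_than_or_equal_r_approx_ring}, $\upsilon$ has rank exactly two; and since the Frobenius norm is an inner product norm, Theorem \ref{rank_two_approx_not_generic} shows $\im \Pi_i(\upsilon)$ fails to be two-dimensional for some mode $i$. A nonzero tensor has every contraction map nonzero, so that image is in fact one-dimensional. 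After relabeling the modes, I assume $i = 1$.

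First I would pin down the shape of $\upsilon$. A rank two tensor whose mode-$1$ contraction has one-dimensional image must have all its mode-$1$ fibers proportional, so $\upsilon = x^1 \otimes w$ for some $x^1 \in V^1$ and $w \in V^2 \otimes V^3$; if $w$ were rank one then $\upsilon$ would be rank one, so $w$ has rank two (an invertible matrix in coordinates). After rescaling I take $x^1$ to be a unit vector and extend it to an orthonormal basis $\{x^1, \bar x^1\}$ of $V^1$ with dual basis $\{x^{1*}, \bar x^{1*}\}$. Applying Theorem \ref{projections_rank_two_approx} in mode $1$ together with $\im \Pi_1(\upsilon) = \langle w \rangle$ shows every mode-$1$ fiber of the residual $\rho := \tau - \upsilon$ is Frobenius-orthogonal to $w$. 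Writing $\rho = x^1 \otimes s + \bar x^1 \otimes t$ with $s = \Pi_1(\rho)(x^{1*})$ and $t = \Pi_1(\rho)(\bar x^{1*})$, both $s$ and $t$ are orthogonal to $w$. If $t = 0$, then $\tau = x^1 \otimes (w+s)$ would be rank at most two, contradicting the hypothesis; hence $t \neq 0$.

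The construction then perturbs $\upsilon$ in the $\bar x^1$ direction by a rank one matrix. Because $t \neq 0$ and the rank one matrices span $V^2 \otimes V^3$, I can choose a rank one $r$ with $\langle t, r\rangle_{B,2} > 0$; the set of such $r$ is open among the rank one matrices, so I may further arrange that the pencil $\langle w, r\rangle$ meets the rank $\le 1$ locus in two distinct points (equivalently $\mathrm{tr}(w^{-1}r) \neq 0$), which fails only on a proper closed subset. For such $r$, set $\upsilon'_\epsilon = x^1 \otimes w + \epsilon\,\bar x^1 \otimes r$. Expanding $w$ and $\epsilon r$ in the basis of $\langle w, r\rangle$ given by its two rank one elements rewrites $\upsilon'_\epsilon$ as a sum of two simple tensors, so it has rank at most two. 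Using Theorem \ref{Frobenius_norm} and the orthonormality of $\{x^1, \bar x^1\}$,
\begin{equation*}
  \|\tau - \upsilon'_\epsilon\|_{B,2}^2 \ = \ \|\rho - \epsilon\,\bar x^1 \otimes r\|_{B,2}^2 \ = \ \|\rho\|_{B,2}^2 \ - \ 2\epsilon\,\langle t, r\rangle_{B,2} \ + \ \epsilon^2\,\|r\|_{B,2}^2,
\end{equation*}
which is strictly less than $\|\rho\|_{B,2}^2 = \|\tau - \upsilon\|_{B,2}^2$ for all sufficiently small $\epsilon > 0$. This contradicts the optimality of $\upsilon$ and proves the theorem.

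I expect the main obstacle to be verifying that $\upsilon'_\epsilon$ genuinely has rank at most two rather than jumping to rank three. This is precisely where $w$ being rank two (invertible) is used: $x^1 \otimes w + \bar x^1 \otimes(\epsilon r)$ is rank two exactly when its pencil is diagonalizable over $\R$, and for a rank one perturbation $r$ this reduces to the mild condition $\mathrm{tr}(w^{-1}r) \neq 0$; the nilpotent (tangent-line) case is the one configuration where the rank could silently jump, and it must be excluded when selecting $r$. The remaining bookkeeping — that a single $r$ can meet both the sign condition and the genericity condition, and that the cross term is exactly $\langle t, r\rangle_{B,2}$ — is routine given Theorems \ref{Frobenius_norm} and \ref{projections_rank_two_approx}.
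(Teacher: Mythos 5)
Your proof is correct, but after the shared opening reduction it departs from the paper's argument in an essential way. Both proofs start the same: Theorems \ref{rank_less_than_or_equal_r_approx_ring} and \ref{rank_two_approx_not_generic} force $\upsilon = x^1 \otimes w$ with $w \in V^2 \otimes V^3$ invertible (the paper writes this as $x^1_1\otimes x^2_1\otimes x^3_1 + x^1_2\otimes x^2_2\otimes x^3_1$, taking the degenerate mode to be the third). From there the paper leans on Theorem \ref{projections_rank_two_approx} to split each slice of $\tau$ into its component in $\im \Pi_1(\upsilon)$ plus an orthogonal remainder, and then proves three separate orthogonality identities, each by exhibiting a perturbation that would beat $\upsilon$ if the identity failed; two of those perturbations are rank three tensors on the tangential variety, so the paper must pass to a sequence of rank two tensors converging to them, and the final contradiction is that $\tau$ itself would be rank two. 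You instead produce a single explicit competitor $\upsilon + \epsilon\,\bar x^1 \otimes r$ with $r$ rank one chosen so that $\langle t, r\rangle_{B,2} > 0$ and $\mathrm{tr}(w^{-1}r)\neq 0$; the pencil identity $\det(\lambda w + \mu r) = \lambda \det(w)\left(\lambda + \mu\,\mathrm{tr}(w^{-1}r)\right)$ then certifies rank at most two outright, so no limiting argument and no case analysis is needed. Notably, the orthogonality you quote from Theorem \ref{projections_rank_two_approx} is never actually used in your computation --- only $t \neq 0$, which you correctly extract from $\rk\tau = 3$, enters the final inequality --- and your density argument for choosing $r$ (a nonzero bilinear form cannot vanish on a nonempty open subset of $V^2 \times V^3$) is sound, as is your identification of the nilpotent pencil as the one configuration where the perturbation could silently jump to rank three. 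What the paper's longer route buys is the geometric corollary it advertises, namely that the nearest point of $\tau$ to $\sigma_2(X)$ lies on the tangential variety of the Segre variety; what yours buys is brevity and the elimination of the closure-of-the-secant-lines step, at the modest cost of a determinant-and-trace computation that is slightly less in the coordinate-free spirit, though both invariants of the pencil are basis-independent.
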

\begin{proof}
Suppose for contradiction that there existed a tensor $\upsilon$ that was an optimal rank two approximation of $\tau$. By Theorem $\ref{rank_two_approx_not_generic}$, we may assume 
\begin{gather*}
    \upsilon \ = \ x^1_1 \otimes x^2_1 \otimes x^3_1 \ + \ x^1_2 \otimes x^2_2 \otimes x^3_1
\end{gather*} for some vectors $x^i_j$.
Since $\{e^1_1, e^1_2\}$ is a basis of $V^1$, there exists scalars $a$, $b$, $c$, and $d$ such that
\begin{gather*}
    x^1_1 = ae^1_1 + ce^1_2 \ \text{ and } \ x^1_2 = be^1_1 + de^1_2.
\end{gather*}
It follows that 
\begin{gather*}
    \upsilon \ = \ e^1_1 \otimes (ax^2_1 + bx^2_2) \otimes x^3_1 \ + \ e^1_2 \otimes (cx^2_1 + dx^2_2) \otimes x^3_1.
\end{gather*}
If the set $\{x^1_1, x^1_2\}$ were linearly dependent, then $\upsilon$ would be rank one by multilinearity, which would contradict Theorem $\ref{rank_less_than_or_equal_r_approx_ring}$. Hence, $\{x^1_1, x^1_2\}$ is independent, and is thus a basis of $V^1$. Let $\{x^{1*}_1, x^{1*}_2\}$ denote its dual basis. The image of $\Pi_1(\upsilon)$ is the span of $\Pi_1(\upsilon)(x^{1*}_1)$ $= $ $x^2_1 \otimes x^3_1$ and $\Pi_1(\upsilon)(x^{1*}_2)$ $= $ $x^2_2 \otimes x^3_1$. These two tensors are linearly independent in $V^2 \otimes V^3$ since the set $\{x^2_1, x^2_2\}$ is independent, which also follows from the fact that $\upsilon$ is rank two. Hence, the image of $\Pi_1(\upsilon)$ is the plane spanned by the tensors ${x^2_1 \otimes x^3_1}$ and ${x^2_2 \otimes x^3_1}$. Let $M$ denote this plane and let $P_M$ denote the projection onto this plane. By Theorem $\ref{projections_rank_two_approx}$, 
\begin{align*}
    P_M(\Pi_1(\tau)(e^{1*}_1)) &= \Pi_1(\upsilon)(e^{1*}_1) = (ax^2_1 + bx^2_2) \otimes x^3_1, \text{ and } \\
    P_M(\Pi_1(\tau)(e^{1*}_2)) &= \Pi_1(\upsilon)(e^{1*}_2) = (cx^2_1 + dx^2_2) \otimes x^3_1.
\end{align*}
\noindent Let $x^3_2 \in V^3$ be a vector such that the set $\{x^3_1, x^3_2\}$ is orthogonal with respect to ${\langle \cdot  \mid  \cdot \rangle_{B,2}}$. The set $\{ {x^2_1 \otimes x^3_2}, \ {x^2_2 \otimes x^3_2} \}$ spans the orthogonal complement of the plane $M$ in ${V^2 \otimes V^3}$ since
\begin{align*}
    \langle x^2_i \otimes x^3_2 \mid x^2_j \otimes x^3_1 \rangle_{B,2} =  \langle x^2_i \mid x^2_j \rangle_{B,2} \ \langle x^3_2 \mid x^3_1 \rangle_{B,2} = 0 \text{ for } i, j \in \{1,2\},
\end{align*}
by property ($\ref{Frobenius_norm}$). Thus, there must exist some constants $r$, $s$, $p$, and $q$ such that
\begin{align*}
    \Pi_1(\tau)(e^{1*}_1) &= (ax^2_1 + bx^2_2) \otimes x^3_1 + (rx^2_1 + sx^2_2) \otimes x^3_2, \text{ and } \\
    \Pi_1(\tau)(e^{1*}_2) &= (cx^2_1 + dx^2_2) \otimes x^3_1 + (px^2_1 + qx^2_2) \otimes x^3_2.
\end{align*}
It follows that $\tau$ can be written in the form 
\begin{align*}
    \tau     &\ = \ e^1_1 \otimes (ax^2_1 + bx^2_2) \otimes x^3_1 \ + \ e^1_1 \otimes (rx^2_1 + sx^2_2) \otimes x^3_2 \\
    &\hspace{10mm} \ + \ e^1_2 \otimes (cx^2_1 + dx^2_2) \otimes x^3_1 \ + \ e^1_2 \otimes (px^2_1 + qx^2_2) \otimes x^3_2. 
\end{align*}

\noindent If we can show that 
\begin{align}
      &\langle \ (cx^2_1 + dx^2_2) \mid  (rx^2_1 + sx^2_2) \ \rangle_{B,2} = 0, \label{equality_3} \\
      &\langle \ (ax^2_1 + bx^2_2) \mid  (px^2_1 + qx^2_2) \ \rangle_{B,2} = 0, \text{ and }  \label{equality_2}\\
      &\langle \ (ax^2_1 + bx^2_2) \mid  (rx^2_1 + sx^2_2) \ \rangle_{B,2} = 0, \label{equality_1} 
\end{align}
then this would imply that 
\begin{equation*}
    (px^2_1 + qx^2_2) = k_1(rx^2_1 + sx^2_2) \text{ and } (cx^2_1 + dx^2_2) = k_2(ax^2_1 + bx^2_2) 
\end{equation*}
\noindent for some constants $k_1$, $k_2$, since these vectors are in a two-dimensional space. It then follows by the multilinearity of the tensor product that
\begin{align*}
    \tau &\ = \ e^1_1 \otimes (ax^2_1 + bx^2_2) \otimes x^3_1 \ + \ e^1_1 \otimes (rx^2_1 + sx^2_2) \otimes x^3_2 \\
    &\hspace{15mm} + \ e^1_2 \otimes k_2(ax^2_1 + bx^2_2)\otimes x^3_1 \ + \ e^1_2 \otimes k_1(rx^2_1 + sx^2_2) \otimes x^3_2 \\
    &\ = \ (e^1_1 + k_2e^1_2) \otimes (ax^2_1 + bx^2_2) \otimes x^3_1 \ + \ (e^1_1 + k_1e^1_2) \otimes (rx^2_1 + sx^2_2) \otimes x^3_2,
\end{align*}
contradicting that $\tau$ is rank three. 

We first prove equality ($\ref{equality_3}$) by considering the tensor
\begin{align*}
    &\upsilon_1(\epsilon) = e^1_2 \otimes (cx^2_1 + dx^2_2) \otimes x^3_1 \ +  \ e^1_1 \otimes (ax^2_1 + bx^2_2) \otimes x^3_1 \ + \ e^1_1 \otimes (cx^2_1 + dx^2_2) \otimes \epsilon x^3_2.
\end{align*}

\noindent Suppose for contradiction that ($\ref{equality_3}$) were not true. It would then follow that 
\begin{equation*}
    \langle \ (rx^2_1 + sx^2_2) \otimes x^3_2 \mid (cx^2_1 + dx^2_2) \otimes x^3_2 \ \rangle_{B, 2}  = \langle  (rx^2_1 + sx^2_2) \mid (cx^2_1 + dx^2_2) \rangle_{B, 2} \ \|x^3_2\|_{B,2}^2 \neq 0.
    \end{equation*}
\noindent We can always choose a real $\epsilon$ small enough in absolute value such that 
\begin{gather*}
   B_1(\epsilon) =  -2\epsilon \ \langle \ (rx^2_1 + sx^2_2) \otimes x^3_2 \mid (cx^2_1 + dx^2_2) \otimes x^3_2 \ \rangle_{B, 2} \  +  \ \epsilon^2 \ \| (cx^2_1 + dx^2_2) \otimes x^3_2 \|^2_{B, 2} 
\end{gather*}
is negative. For example, if $\langle \ (rx^2_1 + sx^2_2) \otimes x^3_2 \mid (cx^2_1 + dx^2_2) \otimes x^3_2 \ \rangle_{B, 2}  < 0$, an $\epsilon < 0$ small enough in absolute value would result in a $B_1(\epsilon)$ negative. Suppose such an $\epsilon$ is chosen. Observe that
\begin{align*}
    \Pi_1(\upsilon_1(\epsilon))(e^{1*}_1) &= (ax^2_1 + bx^2_2) \otimes x^3_1 \ + \ \epsilon (cx^2_1 + dx^2_2) \otimes x^3_2, \text{ and } \\
    \Pi_1(\upsilon_1(\epsilon))(e^{1*}_2) &= (cx^2_1 + dx^2_2) \otimes x^3_1 =  \Pi_1(\upsilon)(e^{1*}_2). \nonumber
\end{align*}
\noindent It thus follows that
\begin{align}
    \| \tau - \upsilon_1(\epsilon) \|_{B, 2}^2 & \ = \ \| \Pi_1(\tau)(e^{1*}_1) - \Pi_1(\upsilon_1(\epsilon))(e^{1*}_1) \|_{B, 2}^2 \ + \ \| \Pi_1(\tau)(e^{1*}_2) - \Pi_1(\upsilon_1(\epsilon))(e^{1*}_2) \|^2_{B, 2}  \label{line_1}\\ 
    &\ = \ \| \Pi_1(\tau)(e^{1*}_1) - \Pi_1(\upsilon_1(\epsilon))(e^{1*}_1) \|^2_{B, 2}  \ + \ \| \Pi_1(\tau)(e^{1*}_2) - \Pi_1(\upsilon)(e^{1*}_2) \|^2_{B, 2}.  \label{line_2}
\end{align}
Equation ($\ref{line_1}$) follows from Theorem $\ref{property_contraction_maps_p_norms}$, and equation ($\ref{line_2}$) follows from the fact that $\Pi_1(\upsilon)(e^{1*}_2) = \Pi_1(\upsilon_1(\epsilon))(e^{1*}_2)$. By the multilinearity of the tensor product and the multilinearity of the inner product, we conclude that 
\begin{align*}
    \| \tau - \upsilon_1(\epsilon) \|_{B, 2}^2 & \ = \ \langle \ \Pi_1(\tau)(e^{1*}_1) - \Pi_1(\upsilon_1(\epsilon))(e^{1*}_1) \ \mid \ \Pi_1(\tau)(e^{1*}_1) - \Pi_1(\upsilon_1(\epsilon))(e^{1*}_1) \ \rangle_{B, 2}  \\
    &\hspace{30mm} + \ \| \Pi_1(\tau)(e^{1*}_2) - \Pi_1(\upsilon)(e^{1*}_2) \|^2_{B, 2}  \\
    & \ = \ \langle \ (rx^2_1 + sx^2_2) \otimes x^3_2 - \epsilon (cx^2_1 + dx^2_2) \otimes x^3_2 \ \mid \ (rx^2_1 + sx^2_2) \otimes x^3_2 - \epsilon (cx^2_1 + dx^2_2) \otimes x^3_2 \ \rangle_{B, 2}  \\
    &\hspace{30mm} + \ \| \Pi_1(\tau)(e^{1*}_2) - \Pi_1(\upsilon)(e^{1*}_2) \|^2_{B, 2}  \\
    & \ = \ \| (rx^2_1 + sx^2_2) \otimes x^3_2 \|^2_{B, 2}  \  - 2 \epsilon \ \langle \ (rx^2_1 + sx^2_2) \otimes x^3_2 \ \mid \ (cx^2_1 + dx^2_2) \otimes x^3_2 \ \rangle_{B, 2}     \\
    &\hspace{30mm}+ \ \epsilon^2 \ \| (cx^2_1 + dx^2_2) \otimes x^3_2 \|^2_{B, 2} \ + \ \| \Pi_1(\tau)(e^{1*}_2) - \Pi_1(\upsilon)(e^{1*}_2) \|^2_{B, 2}  \\
     & \ = \ \| (rx^2_1 + sx^2_2) \otimes x^3_2 \|^2_{B, 2} \  + \ B_1(\epsilon) \ + \ \| \Pi_1(\tau)(e^{1*}_2) - \Pi_1(\upsilon)(e^{1*}_2) \|^2_{B, 2}.
\end{align*}
Since $\epsilon$ was chosen specifically to make $B_1(\epsilon)$ negative, we can conclude that
\begin{align*}
 \| \tau - \upsilon_1(\epsilon) \|_{B, 2}^2 & \ = \ \| (rx^2_1 + sx^2_2) \otimes x^3_2 \|^2_{B, 2} \  + \ B_1(\epsilon) \ + \ \| \Pi_1(\tau)(e^{1*}_2) - \Pi_1(\upsilon)(e^{1*}_2) \|^2_{B, 2}\\
    & \ < \  \| (rx^2_1 + sx^2_2) \otimes x^3_2 \|^2_{B, 2}  \ + \ \| \Pi_1(\tau)(e^{1*}_2) - \Pi_1(\upsilon)(e^{1*}_2) \|^2_{B, 2}  \\
    & \ = \ \| \Pi_1(\tau)(e^{1*}_1) - \Pi_1(\upsilon)(e^{1*}_1) \|^2_{B, 2}  \ + \ \| \Pi_1(\tau)(e^{1*}_2) - \Pi_1(\upsilon)(e^{1*}_2) \|^2_{B, 2}  \\
    & \ = \ \| \tau - \upsilon \|^2_{B, 2}.
\end{align*}
Hence, $\upsilon_1(\epsilon)$ is a better approximation of $\tau$ than $\upsilon$. The tensor $\upsilon_1(\epsilon)$ is also in the tangent space of the Segre variety at $e^1_1 \otimes (cx^2_1 + dx^2_2) \otimes x^3_1$. Hence, there exists a sequence of rank two tensors that converges to $\upsilon_1(\epsilon)$. Thus, there must be some rank two tensor in the sequence that is better approximation to $\tau$ than $\upsilon$, contradicting that $\upsilon$ is an optimal rank two approximation. 

Equality ($\ref{equality_2}$) can be proven in the same way by considering the tensor 
\begin{align*}
     &\upsilon_2(\epsilon) \ = \ e^1_1 \otimes (ax^2_1 + bx^2_2) \otimes x^3_1 \ + \ e^1_2 \otimes (cx^2_1 + dx^2_2) \otimes x^3_1 \ + \ e^1_2 \otimes (ax^2_1 + bx^2_2) \otimes \epsilon x^3_2,
\end{align*}
which is in the tangent space of the Segre variety at ${e^1_2 \otimes (ax^2_1 + bx^2_2) \otimes x^3_1}$. 

It thus remains to show equality ($\ref{equality_1}$), which we prove by considering the rank two tensor
\begin{align*}
    \upsilon_3(\epsilon) \ = \ e^1_1 \otimes (ax^2_1 + bx^2_2) \otimes (x^3_1 + \epsilon( x^3_1 + x^3_2)) \ + \ e^1_2 \otimes (cx^2_1 + dx^2_2) \otimes x^3_1.
\end{align*}
It follows that the mode-$1$ contractions of $\upsilon_3(\epsilon)$ are as follows. 
\begin{align*}
    \Pi_1(\upsilon_3(\epsilon))(e^{1*}_1) & \ = \ (ax^2_1 + bx^2_2) \otimes x^3_1 \ + \ \epsilon (ax^2_1 + bx^2_2) \otimes (x^3_1 + x^3_2), \text{ and} \\
    \Pi_1(\upsilon_3(\epsilon))(e^{1*}_2) & \ = \ (cx^2_1 + dx^2_2) \otimes x^3_1 \ = \ \Pi_1(\upsilon)(e^{1*}_2).
\end{align*}
Suppose for contradiction that ($\ref{equality_1}$) were nonzero. It would then follow that 
\begin{align*}
    \langle \ (rx^2_1 + sx^2_2) \otimes x^3_2 \mid  (ax^2_1 + bx^2_2) \otimes (x^3_1 + x^3_2) \ \rangle_{B,2} =  \langle \ (rx^2_1 + sx^2_2) \mid (ax^2_1 + bx^2_2)  \ \rangle_{B,2} \ \|x^3_2\|^2_{B,2} \neq 0.
\end{align*}
\noindent We could then choose an $\epsilon$ small enough in absolute value such that $D(\epsilon) < 0$, where
\begin{align*}
    D(\epsilon) &= \ - 2 \epsilon \ \langle (rx^2_1 + sx^2_2) \otimes x^3_2 \mid (ax^2_1 + bx^2_2) \otimes (x^3_1 + x^3_2) \rangle_{B,2}  \ + \ \epsilon^2 \ \| (ax^2_1 + bx^2_2) \otimes (x^3_1 + x^3_2) \|_{B,2}^2.
\end{align*}
\noindent For example, if $\langle \ (rx^2_1 + sx^2_2) \otimes x^3_2 \mid  (ax^2_1 + bx^2_2) \otimes (x^3_1 + x^3_2) \ \rangle_{B,2} < 0$, an $\epsilon < 0$ small enough in absolute value will yield a negative $D(\epsilon)$. Suppose such an $\epsilon$ is chosen. Then,
\begin{align*}
\|\tau - \upsilon_3(\epsilon) \|_{B,2}^2 & \ = \  \|\Pi_1(\tau)(e^{1*}_1) - \Pi_1(\upsilon_3(\epsilon))(e^{1*}_1) \|^2_{B,2} \ + \ \|\Pi_1(\tau)(e^{1*}_2) - \Pi_1(\upsilon_3(\epsilon))(e^{1*}_2) \|^2_{B,2} \\
&\ = \  \|\Pi_1(\tau)(e^{1*}_1) - \Pi_1(\upsilon_3(\epsilon))(e^{1*}_1) \|^2_{B,2} \ + \ \|\Pi_1(\tau)(e^{1*}_2) - \Pi_1(\upsilon)(e^{1*}_2) \|^2_{B,2}. 
\end{align*}
Observe that
\begin{gather*}
     \|\Pi_1(\tau)(e^{1*}_1) - \Pi_1(\upsilon_3(\epsilon))(e^{1*}_1) \|^2_{B,2} \ = \ \|(rx^2_1 + sx^2_2) \otimes x^3_2 - \epsilon (ax^2_1 + bx^2_2) \otimes (x^3_1 + x^3_2) \|_{B,2}^2.
\end{gather*}
Writing this as an inner product, we see that
\begin{align*}
\|\tau - \upsilon_3(\epsilon) \|_{B,2}^2 & \ = \  \| (rx^2_1 + sx^2_2) \otimes x^3_2 \|_{B,2}^2 \ - \ 2 \epsilon \ \langle (rx^2_1 + sx^2_2) \otimes x^3_2 \mid (ax^2_1 + bx^2_2) \otimes (x^3_1 + x^3_2) \rangle_{B,2}   \\
&\hspace{15mm} + \ \epsilon^2 \ \| (ax^2_1 + bx^2_2) \otimes (x^3_1 + x^3_2) \|_{B,2}^2 \ + \ \|\Pi_1(\tau)(e^{1*}_2) - \Pi_1(\upsilon)(e^{1*}_2) \|^2_{B,2} \\
& \ = \  \| (rx^2_1 + sx^2_2) \otimes x^3_2 \|_{B,2}^2 \ + \ D(\epsilon) \ + \ \|\Pi_1(\tau)(e^{1*}_2) - \Pi_1(\upsilon)(e^{1*}_2) \|^2_{B,2}.
\end{align*}
Finally, since $\epsilon$ was chosen so that $D(\epsilon)$ would be negative, it follows that
\begin{align*}
\|\tau - \upsilon_3(\epsilon) \|_{B,2}^2 & \ < \  \| (rx^2_1 + sx^2_2) \otimes x^3_2 \|_{B,2}^2 \ + \ \|\Pi_1(\tau)(e^{1*}_2) - \Pi_1(\upsilon)(e^{1*}_2) \|^2_{B,2} \\
& \ = \ \|\Pi_1(\tau)(e^{1*}_1) - \Pi_1(\upsilon)(e^{1*}_1) \|^2_{B,2} \ + \ \|\Pi_1(\tau)(e^{1*}_2) - \Pi_1(\upsilon)(e^{1*}_2) \|^2_{B,2} \\
&\ = \ \| \tau - \upsilon \|^2_{B,2}.
\end{align*}
\noindent Hence, the rank two tensor $\upsilon_3(\epsilon)$ is a better approximation of $\tau$ than $\upsilon$, contradicting that $\upsilon$ is an optimal rank two approximation.  \end{proof}

We have thus shown that rank three $2 \times 2 \times 2$ real tensors have no optimal rank two approximations with respect to the Frobenius norm. This implies that the nearest point of a rank three $2 \times 2 \times 2$ tensor $\tau$ to the second secant variety $\sigma_2(X)$ of the Segre variety $X$ with respect to the Frobenius norm must be rank three. In fact, our proof above demonstrates that the nearest point of $\tau$ to $\sigma_2(X)$ is in fact on the tangential variety of the Segre variety, which is the set of all tensors contained in the tangent space of the Segre variety at some point $u \in X$. This is a variety as is it the daul projective variety of $X$, and is denoted $TX$. That is, 
\begin{equation*}
    TX = \{\tau \in V^1 \otimes V^2 \otimes V^3 \mid \tau \in T_u(X) \text{ for some } u \in X \}.
\end{equation*}
From Theorem (\ref{tangentspace}), we know that 
\begin{align*}
    TX &= \{\tau \in V^1 \otimes V^2 \otimes V^3 \ \mid \ \tau \ = \ x^1_2 \otimes x^2_1 \otimes x^3_1 \ + \  x^1_1 \otimes x^2_2 \otimes x^3_1 \ + \ x^1_1 \otimes x^2_1 \otimes x^3_2 \\
    & \hspace{90mm} \text{ for some } x^i_j \in V^i \}.
\end{align*}
There is an open, dense subset of $TX$ of rank three tensors. However, there are rank two elements in $TX$, and the fact that these rank two tensors can never be the the nearest point on $TX$ of any rank three tensor implies there is an interesting curvature of $TX$ at these points. We now consider an example of such a rank two tensor in $TX$. Let $\{x^i_1, x^i_2\}$ be independent vectors in $V^i$ for $i = 1,2,3$. The tensor
\begin{gather*}
    \nu  \ = \ x^1_1 \otimes x^2_1 \otimes x^3_1 \ + \ x^1_2 \otimes x^2_2 \otimes x^3_1
\end{gather*}
is a rank two tensor in $TX$. For every $\epsilon \ne 0$, the tensor
\begin{equation*}
\tau(\epsilon) \ = \ x^1_1 \otimes x^2_1 \otimes x^3_1 \ + \ x^1_2 \otimes x^2_2 \otimes x^3_1 \ + \ \epsilon(x^1_1 + x^1_2) \otimes (x^2_1 + x^2_2) \otimes x^3_2    
\end{equation*}
is rank three. Clearly, ${\lim\limits_{\epsilon \to 0} \tau(\epsilon) = \nu}$. However, the nearest point to $\tau(\epsilon)$ on $TX$ with respect to the Frobenius norm is never $\nu$, even when $\epsilon$ is infinitesimally small, since the nearest point to $\tau(\epsilon)$ on $TX$ must be rank three. The tangential variety of the $2 \times 2 \times 2$ Segre variety must thus have significant curvature at its rank two points, which is already suggested by the fact that $\nu$ is tangent to the Segre variety at any tensors in the form $x^1_2 \otimes x^2_1 \otimes y^3_1$ and $x^1_1 \otimes x^2_2 \otimes y^3_1$ for some $y^3_1 \in V^3$. In contrast, the rank three tensors in $TX$ are tangent to distinct point on the Segre variety up to a multiplicative constant. 

\begin{thm}
Let $V^1$, $V^2$, and $V^3$ be two-dimensional real vector spaces. Let $X$ denote the Segre variety of simple tensors in $V^1 \otimes V^2 \otimes V^3$, and let $TX$ denote the tangential variety of $X$. If $\tau \in TX$ is rank three, then it is tangent to a unique point of $X$ up to a multiplicative constant. 
\end{thm}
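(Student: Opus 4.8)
The plan is to recover the point of tangency intrinsically from $\tau$ by locating the rank-one tensors inside the images of its three contraction maps. First I would suppose $\tau$ is tangent to a point $u = x^1_1 \otimes x^2_1 \otimes x^3_1$ of $X$, so that by Theorem \ref{tangentspace}
\[
\tau \ = \ x^1_2 \otimes x^2_1 \otimes x^3_1 \ + \ x^1_1 \otimes x^2_2 \otimes x^3_1 \ + \ x^1_1 \otimes x^2_1 \otimes x^3_2
\]
for some $x^i_2 \in V^i$. I would then observe that, because $\tau$ has rank three, each pair $\{x^i_1, x^i_2\}$ is necessarily linearly independent: if, for instance, $x^1_2 \in \langle x^1_1 \rangle$, then $\tau$ factors as $x^1_1 \otimes \kappa$ for some $\kappa \in V^2 \otimes V^3$ and so has rank at most two, and if $x^2_2 \in \langle x^2_1 \rangle$ then $\tau = \big((x^1_2 + c x^1_1)\otimes x^2_1\big)\otimes x^3_1 + (x^1_1 \otimes x^2_1)\otimes x^3_2$ is a sum of two simple tensors, again of rank at most two, with the mode-$3$ case handled symmetrically.

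Next I would compute the mode-$1$ contraction image. Since $\{x^1_1, x^1_2\}$ is independent, the evaluations at $x^1_1$ and $x^1_2$ are independent functionals, so $\im \Pi_1(\tau)$ is the plane $\langle \, x^2_1 \otimes x^3_1, \ x^2_2 \otimes x^3_1 + x^2_1 \otimes x^3_2 \, \rangle \subseteq V^2 \otimes V^3$. Writing a general element in the basis $\{x^2_1, x^2_2\}$, $\{x^3_1, x^3_2\}$ as $a\,x^2_1 \otimes x^3_1 + b\,(x^2_2 \otimes x^3_1 + x^2_1 \otimes x^3_2)$, I would represent it by the matrix $\left(\begin{smallmatrix} a & b \\ b & 0 \end{smallmatrix}\right)$, whose determinant is $-b^2$. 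Hence such an element has rank one precisely when $b = 0$, and $\langle x^2_1 \otimes x^3_1 \rangle$ is the \emph{unique} line of rank-one tensors inside $\im \Pi_1(\tau)$. Because a nonzero rank-one tensor in $V^2 \otimes V^3$ determines its two factor lines uniquely, this recovers $\langle x^2_1 \rangle$ and $\langle x^3_1 \rangle$ directly from $\tau$, independently of the chosen tangent representation.

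By the symmetry of the tangent form under permuting the three modes, the identical computation applied to $\Pi_2(\tau)$ and $\Pi_3(\tau)$ shows that $\langle x^1_1 \otimes x^3_1 \rangle$ and $\langle x^1_1 \otimes x^2_1 \rangle$ are the unique rank-one lines in $\im \Pi_2(\tau)$ and $\im \Pi_3(\tau)$, pinning down $\langle x^1_1 \rangle$ alongside the already-determined $\langle x^2_1 \rangle$ and $\langle x^3_1 \rangle$. Thus each of the three factor lines $\langle x^i_1 \rangle$ is an intrinsic invariant of $\tau$. Finally, if $\tau$ is also tangent at a second point $\tilde{u} = \tilde{x}^1_1 \otimes \tilde{x}^2_1 \otimes \tilde{x}^3_1$, the same argument forces $\langle \tilde{x}^i_1 \rangle = \langle x^i_1 \rangle$ for every $i$, whence $\tilde{u}$ equals $u$ up to a single multiplicative constant. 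I expect the one substantive step to be the determinant computation isolating the rank-one locus of each $\im \Pi_i(\tau)$; everything else is bookkeeping, and I must be careful that ``unique up to a multiplicative constant'' is exactly the assertion that the three factor lines $\langle x^i_1 \rangle$ coincide, which is precisely what the contraction-image invariants deliver.
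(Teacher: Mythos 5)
Your proof is correct, but it takes a genuinely different route from the paper. The paper argues directly with two tangent representations: assuming $\tau$ is tangent at both $x^1_1 \otimes x^2_1 \otimes x^3_1$ and $x^1_2 \otimes x^2_2 \otimes x^3_2$, it contracts both representations against a dual vector $y^{1*}_1$ chosen to annihilate most terms of the first one, invokes Lemma \ref{independentunique} to force $y^{1*}_1(x^1_2) = 0$, and then compares images of further contractions to extract the proportionality constants $k_1, k_2, k_3$ one factor at a time. You instead make the tangency point an \emph{intrinsic invariant} of $\tau$: each $\im \Pi_i(\tau)$ is the plane $\langle x^j_1 \otimes x^k_1, \; x^j_2 \otimes x^k_1 + x^j_1 \otimes x^k_2 \rangle$, and your determinant computation $\det\bigl(\begin{smallmatrix} a & b \\ b & 0 \end{smallmatrix}\bigr) = -b^2$ shows its rank-one locus is exactly the line $\langle x^j_1 \otimes x^k_1 \rangle$, whose factor lines recover $\langle x^j_1 \rangle$ and $\langle x^k_1 \rangle$. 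Your preliminary observation that rank three forces each $\{x^i_1, x^i_2\}$ to be independent is needed and correctly justified (it is the same fact the paper asserts for its sets $\{y^i_j, x^i_j\}$). What your approach buys is that uniqueness falls out without ever juggling two representations simultaneously, and it yields a clean geometric description of the tangency point as the product of the unique rank-one lines inside the three contraction images; since the relevant computation lives entirely in the $2 \times 2$ block spanned by $\{x^j_1, x^j_2\}$ and $\{x^k_1, x^k_2\}$, it is also the version of the argument most likely to survive passage to $n_1 \times n_2 \times n_3$ tensors, which is in the spirit the paper advertises. The paper's version, by contrast, reuses its Lemma \ref{independentunique} and stays closer to the machinery already developed in Section \ref{section_2}.
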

\begin{proof}
Suppose $\tau$ is tangent to both the tensors ${x^1_1 \otimes x^2_1 \otimes x^3_1}$ and ${x^1_2 \otimes x^2_2 \otimes x^3_2}$. It then follows from Theorem \ref{tangentspace} that 
\begin{align}
    \tau \ & \ = \ y^1_1 \otimes x^2_1 \otimes x^3_1 \ + \ x^1_1 \otimes y^2_1 \otimes x^3_1 \  + \ x^1_1 \otimes x^2_1 \otimes y^3_1, \label{rep_1} \text{ and } \\
    \tau \ &\ = \ y^1_2 \otimes x^2_2 \otimes x^3_2 \ + \ x^1_2 \otimes y^2_2 \otimes x^3_2 \ + \ x^1_2 \otimes x^2_2 \otimes y^3_2, \label{rep_2}
\end{align}
for some vectors $y^i_j$. Since $\tau$ is rank three, the sets $\{ y^i_j, x^i_j\}$ must be independent for all $i,j$. Let $\{y^{i*}_j, x^{i*}_j\}$ be the corresponding dual bases. Considering the contraction maps of $\tau$ with respect to both of these representation (\ref{rep_1}) and (\ref{rep_2}), we conclude that
\begin{align*}
    \Pi_1(\tau)(y^{1*}_1) \ = \ x^2_1 \otimes x^3_1 \ = \ x^2_2 \otimes \left(y^{1*}_1(y^1_2) x^3_2 + y^{1*}_1(x^1_2)y^3_2\right) \ + \ y^{1*}_1(x^1_2)y^2_2 \otimes x^3_2.
\end{align*}
By Lemma \ref{independentunique}, it follows that $y^{1*}_1(x^1_2) = 0$, so $x^2_1 \otimes x^3_1 \ = \ y^{1*}_1(y^1_2) \ x^2_2 \otimes x^3_2$. The fact that $y^{1*}_1(x^1_2) = 0$ implies that $x^1_2 = k_1 x^1_1$ for some constant $k_1$. Furthermore, $\im \Pi_2( x^2_1 \otimes x^3_1 ) $ = $\im \Pi_2(  y^{1*}_1(y^1_2) x^2_2 \otimes x^3_2 )$ implies that $x^2_1 = k_2 x^2_2$ for some constant $k_2$, and $\im \Pi_1( x^2_1 \otimes x^3_1 ) $ = $\im \Pi_1(  y^{1*}_1(y^1_2) x^2_2 \otimes x^3_2 )$ implies that $x^3_2 = k_3 x^3_1$ for some constant $k_3$. Hence, $x^1_2 \otimes x^2_2 \otimes x^3_2$ $= k_1 k_2 k_3 \  x^1_1 \otimes x^2_1 \otimes x^3_1$. 
\end{proof}

\bibliographystyle{unsrt}  

\section*{Acknowledgments}
The author would like to thank Saugata Basu and Ryan Vitale for their valuable discussions about this paper.

\end{document}